\documentclass{article}

\usepackage[english]{babel}

\usepackage[letterpaper,top=2cm,bottom=2cm,left=3cm,right=3cm,marginparwidth=1.75cm]{geometry}

\usepackage{amsmath,amssymb,amsthm}
\usepackage{graphicx}
\usepackage[colorlinks=true, allcolors=blue]{hyperref}

\usepackage{algorithm} 
\usepackage{algorithmic}  
\usepackage[algo2e]{algorithm2e} 
\usepackage{color}

\newtheorem{thm}{Theorem}[section]
\newtheorem{theorem}{Theorem}[section]
\newtheorem{definition}[thm]{Definition}

\newtheorem{lemma}[thm]{Lemma}

\def\cC{{\cal C}}

\def\cI{{\cal I}}

\def\cM{{\cal M}}

\def\cQ{{\cal Q}}

\def\R{{\mathbb{R}}}

\hypersetup{
	pdftitle={
		Implicit Primal-Dual Guarantees
		in Unconstrained First-Order Minimization
	},
	pdfauthor={
		Benjamin Grimmer and Alex L. Wang
	},
	pdfsubject={
		Stronger primal-dual convergence guarantees and computable dual
		certificates for first-order methods in nonsmooth Lipschitz and
		smooth convex minimization
	},
	pdfkeywords={
		convex optimization,
		unconstrained minimization,
		first-order methods,
		primal-dual guarantees,
		primal-dual gaps,
		dual certificates,
		affine minorants,
		minimizer estimates,
		fixed-step first-order methods,
		subgradient methods,
		gradient methods,
		momentum methods,
		Lipschitz convex minimization,
		smooth convex minimization,
		performance estimation problems,
		worst-case analysis
	},
	pdfcreator={LaTeX with hyperref},
	pdfdisplaydoctitle=true,
	unicode=true
}

\title{Implicit Primal-Dual Guarantees\\ in Unconstrained First-Order Minimization}
\author{
    Benjamin Grimmer\footnote{Johns Hopkins University, Department of Applied Mathematics and Statistics, \texttt{grimmer@jhu.edu}}
    \and
    Alex L.\ Wang\footnote{Purdue University, Daniels School of Business, \texttt{wang5984@purdue.edu}}
}
\date{}

\begin{document}
\maketitle

\begin{abstract}
     This work considers the design of first-order convex optimization algorithms and convergence proofs. In particular, we consider nonsmooth Lipschitz and smooth problems accessed through a subgradient or gradient oracle, respectively. For the general class of fixed-step first-order methods, prior work on Performance Estimation Problems (PEPs) has shown that structured, tight convergence proofs typically exist. Under mild conditions, we further show that any first-order method guaranteeing a bound on the primal objective gap $f(x_N)-f(x_\star)$ assuming only a bound on $\|x_0-x_\star\|$ actually has a stronger guarantee on an explicit, computable primal-dual gap at the same rate. These implicit optimal dual certificates, which take the form of affine lower bounds, also provide insight into the role of auxiliary sequences in momentum methods.
\end{abstract}

\section{Introduction}
We consider the setting of unconstrained convex optimization via first-order methods. Namely, we consider primal problems of the form
\begin{equation} \label{eq:main-problem}
    f_\star = \min_{x\in\R^d} f(x)
\end{equation}
for a given convex function $f\colon \R^d\rightarrow \R$, attained at some minimizer, denoted $x_\star$. A problem instance consists of a function $f$, minimizer $x_\star$, and initialization $x_0$ satisfying $\|x_0-x_\star\|\leq D$ for some $D>0$.
The two problem classes we cover are $M$-Lipschitz nonsmooth convex optimization and $L$-smooth convex optimization.

We will consider iterative methods for solving \eqref{eq:main-problem} given access to either a subgradient oracle or a gradient oracle, depending on the structure assumed on $f$.
Assume that the iterative method produces iterates $x_0,\dots,x_N$ and let $f_i = f(x_i)$ denote the function values and $g_i \in\partial f(x_i)$ denote the subgradients (or gradients) of $f$ at $x_i$.

Our primary goal is to understand the nature of convergence guarantees that iterative methods can provide. 
A substantial body of work in the theoretical optimization literature has developed first-order methods in these settings guaranteeing
\begin{equation}\label{eq:basic-objective-bound}
    f_N - f_\star \leq r.
\end{equation}
Here $r$ represents a convergence rate, typically decreasing to zero as $N$ grows.
In this work, we show that objective guarantees of the form~\eqref{eq:basic-objective-bound} leave substantial structure on the table and can be strengthened ``for free'', i.e., without loss in the numerical rate. 
We say our strengthened bounds are \emph{implicit} since they are implied without additional effort from existing work proving~\eqref{eq:basic-objective-bound}.

Specifically, consider an \emph{arbitrary} first-order method and suppose that~\eqref{eq:basic-objective-bound} holds.
Our strengthened bounds relate the primal iterate $x_N$ with a 
``minimizer estimate'' $z_{N+1}\in\R^d$ and a dual certificate\footnote{This is a dual certificate for the given problem instance, \emph{not} a general PEP certificate.} (formally introduced in Section~\ref{sec:primal-dual}). 
The dual certificate, $\alpha$, is an affine function constructed only from \emph{observed first-order information} with the property that $\alpha(x_\star) \leq f(x_\star)$.
The following two strengthened forms of the guarantee~\eqref{eq:basic-objective-bound} hold: for some $\sigma\geq 0$,
\begin{gather*}
    \underbrace{f_N - f_\star}_{\text{Primal Objective Gap}} \leq \underbrace{f_N - \alpha(x_\star)}_{\text{Primal-Dual Gap}} + \sigma\underbrace{\| z_{N+1} - x_\star \|^2}_{\text{Minimizer Estimate}}\leq r
\end{gather*}
and
\begin{gather*}
    \underbrace{f_N - f_\star}_{\text{Primal Objective Gap}} \leq \underbrace{f_N - \min_{\|x-x_0\|\leq D}\alpha(x)}_{\text{Computable Primal-Dual Gap}} \leq r.
\end{gather*}

The first strengthened bound includes an additional minimizer estimate $z_{N+1}\in\R^d$. As a result, on any instance where \eqref{eq:basic-objective-bound} is tight and $\sigma>0$, one must have $z_{N+1}=x_\star$. 
Conversely, if $\sigma > 0$ and $x_\star\neq z_{N+1}$, the algorithm must strictly outperform the bound \eqref{eq:basic-objective-bound} by an amount growing quadratically in their distance.
The second strengthened bound shows that the primal objective gap, $f_N-f_\star$, can be replaced by a computable primal-dual gap without loss in the numerical rate.

The constructions for $\alpha, z_{N+1},\sigma$ can be made explicit when the algorithm in question is a 
\emph{fixed-step first-order method} (FSFOM) under mild nondegeneracy conditions.
FSFOMs are parameterized by predetermined lower triangular weight matrices $W\in\R^{N\times N}$ and generate their iterates according to the rule:
\begin{equation}
    x_n = x_0 - \sum_{i=0}^{n-1}W_{n-1,i}g_i. \label{eq:general-form-subgrad-methods}
\end{equation}
The FSFOM setup models many classical methods for smooth and nonsmooth convex optimization. It includes simple one-step algorithms like gradient descent and the subgradient method as well as momentum-type algorithms like Nesterov's Fast Gradient Method~\cite{Nesterov1983}, the Optimized Gradient Method (OGM)~\cite{kim2016OGM} and the subgradient SSEP method~\cite{drori2019efficient}. In the last decade, FSFOMs have received particular focus as the Performance Estimation Problem (PEP) framework ensures that they typically have structured convergence proof certificates, providing a convergence guarantee $r$ as in~\eqref{eq:basic-objective-bound}. Such certificates can be numerically computed by semidefinite programming.

The explicit constructions of $\alpha, z_{N+1},\sigma$ allow us to derive equivalent primal-dual understandings of many classic methods. These follow directly from examining existing PEP proofs, strengthening their classic guarantee. As one particularly surprising insight, the minimizer estimate $z_{N+1}$ is exactly the auxiliary point generated by momentum methods. For OGM as an example, our theory identifies that the method can be equivalently understood as tracking a primal solution $x_n$ and dual affine function $\alpha_n$, updated by alternating averaging with descent
\begin{align}
	x_n &= \frac{\tau_{n-1}}{\tau_n} \left(x_{n-1} - \frac{1}{L} g_{n-1} - \frac{\tau_n - \tau_{n-1}}{L}\nabla \alpha_n\right) +  \frac{\tau_n - \tau_{n-1}}{\tau_n}x_0 \nonumber \\
	\alpha_{n+1} &= \frac{\tau_{n-1}}{\tau_n} \alpha_n +  \frac{\tau_n - \tau_{n-1}}{\tau_n}\left(f(x_n) + \langle g_n, \cdot-x_n\rangle + \frac{1}{2L}\|g_n\|^2\right) \label{eq:affine-aggregating-step-OGM},
\end{align}
where $L$ is the smoothness constant of $f$ and $\tau_n$ is a parameter sequence defined in~\eqref{eq:OGM_tau_def}. In this form, OGM's guarantee can be strengthened, ensuring
\begin{equation*}
	\underbrace{f_N - f_\star}_{\text{Primal Objective Gap}} \leq \underbrace{f(x_N) - \min_{\|x-x_0\|\leq D} \alpha_{N+1}(x)}_{\text{Computable Primal-Dual Gap}} \leq \frac{LD^2}{2\tau_N} \sim \frac{LD^2}{N^2},
\end{equation*}
which is exactly the minimax optimal rate for primal objective gap convergence.

{\bf Outline.} Sections~\ref{sec:pep} and~\ref{sec:primal-dual} introduce the two key tools our development utilizes, the structured convergence analysis framework of performance estimation and an elementary primal-dual formulation of unconstrained minimization problems~\eqref{eq:main-problem}, respectively. Section~\ref{sec:general-results} shows that improved guarantees of the above forms are necessary for any general first-order method in smooth or nonsmooth optimization. 
Sections~\ref{sec:subgrad-development} and~\ref{sec:grad-development} then specialize to the setting of fixed-step methods, where we can characterize $\alpha, z_{N+1}, \sigma$ more explicitly, for Lipschitz convex minimization and smooth convex minimization, respectively.

\subsection{Related Work}
This work is in line with the recent movement in the first-order optimization literature to use duality in algorithm design and analysis. A structured family of methods including the classic subgradient method and fast gradient method was analyzed in a unified fashion via a perturbed Fenchel duality approach by Gutman and Pe\~{n}a~\cite{gutman2022}. A wider approach to algorithmic design from duality was proposed by Diakonikolas and Orecchia~\cite{Diakonikolas2019}. There, they provided a general framework for analyzing continuous-time dynamics for which an associated duality gap converges. From this, associated iterative methods have primal-dual convergence guarantees given by controlling or canceling discretization error terms. This approximate duality gap technique applies beyond the scope of problems considered here, providing algorithms and analyses, for example, covering composite/constrained problems, convex-concave saddle point problems, and variational inequalities.

Particular focus has been placed on the development of a dual understanding of momentum in smooth optimization. Nesterov~\cite{Nesterov1983}'s fast gradient method possesses an estimate sequence analysis that aggregates lower bounding affine models (with an initial quadratic term). Here our lower bounds are similarly built by aggregating affine models, e.g.,~\eqref{eq:affine-aggregating-step-OGM}. An accelerated algorithm based on optimally averaging quadratic lower bounds (for smooth strongly convex problems) was proposed by Drusvyatskiy, Fazel, and Roy~\cite{Drusvyatskiy2018}. Allen-Zhu and Orecchia~\cite{AllenZhu2017} analyzed Nesterov's acceleration through a linear coupling approach: making primal progress through gradient descent steps and dual progress through mirror descent steps. 

More recent works have taken primal-dual and game-theoretic approaches to understand momentum. Lan and Zhou~\cite{LanZhou2018} reformulate their main problem into a primal-dual saddle point problem and then describe momentum as extrapolated steps on the primal side. From this, they proposed new accelerated schemes using gradient extrapolations on the dual side. Wang, Abernethy, and Levy~\cite{WangAbernethyLevy2024} directly reformulate convex optimization as a Fenchel game where no-regret strategies correspond to classical methods. Guarantees for these methods can then be extracted from associated regret bounds. Most recently, Burns and Liang~\cite{burns2026} designed a new accelerated method able to provide computable certificates of accuracy. There, they interpret the resulting primal-dual averaging schemes through zero-sum games and Fisher markets.

These prior works either provided primal-dual approaches to design new algorithms or new understandings of classical methods. In contrast, we provide a general mechanism from which dual phenomena must arise for first-order methods. Stronger primal-dual convergence guarantees necessarily exist for arbitrary algorithms with objective gap guarantees. For any fixed-step method with an appropriate PEP proof, our theory further gives these necessary primal-dual strengthened bounds in closed form.

\section{A Review of Performance Estimation Problems} \label{sec:pep}

The performance estimation framework, pioneered by Drori and Teboulle~\cite{drori2014performance} and Taylor, Hendrickx, and Glineur~\cite{taylor2017CompositePEP,taylor2017InterpolationFMuLPEP}, provides a means to construct a worst-case instance $(f,x_0,x_\star)$ for a fixed-step method (defined by stepsize weights $W$) against a given structured problem class. The key observation is that when such a method is run, the only relevant aspects of the problem instance are the function values and (sub)gradients at the iterates $x_i$. We denote these by
\begin{equation*}
    x_i,\qquad f_i=f(x_i), \qquad g_i\in\partial f(x_i).
\end{equation*}
Further, denote the first-order information that the problem instance provides at the given minimizer by $x_\star$, $f_\star$, and $g_\star=0$.
Let $\cI=\{\star,0,\dots,N\}$.

Using this first-order information, a few useful nonnegative quantities can be defined.
The distance bound implies
\begin{equation}
    \mathcal{D} := D^2 - \|x_0-x_\star\|^2 \geq 0.
\end{equation}
Convexity of the function $f$, and in particular the subgradient inequality, guarantees that the following quantity is nonnegative for every pair of indexed points
\begin{equation}
    \mathcal{C}_{i,j} := f_i - f_j - \langle g_j, x_i-x_j\rangle \geq 0\qquad \forall i,j\in\cI.
\end{equation}
In the case that $f$ is $M$-Lipschitz, one can additionally guarantee that
\begin{equation}
    \mathcal{M}_{i} = M^2 - \|g_i\|^2 \geq 0 \qquad \forall i\in\cI.
\end{equation}
For $L$-smooth convex functions with $L>0$ (defined as $\nabla f$ being $L$-Lipschitz), the convexity inequality can be strengthened to the following cocoercivity-type inequality
\begin{equation}\label{eq:cocoercive}
    \mathcal{Q}_{i,j} := f_i - f_j - \langle g_j, x_i-x_j\rangle -\frac{1}{2L}\|g_i-g_j\|^2 \geq 0\qquad \forall i,j\in\cI.
\end{equation}

The fundamental insight behind the PEP approach to analyzing FSFOMs is the fact that the nonnegativity of these quantities suffices for any analysis based only on observed values. The following interpolation theorems formalize this.
\begin{theorem}[Theorem 3.5 of~\cite{taylor2017CompositePEP}]\label{thm:LipschitzConvexInterpolation}
    The first-order observations $(x_i,f_i,g_i)_{i\in\cI}$ 
    can be interpolated by an $M$-Lipschitz convex function if and only if $\mathcal{C}_{i,j}\geq 0$ for all $i,j\in\cI$ and $\mathcal{M}_i\geq 0$ for all $i\in\cI$.
\end{theorem}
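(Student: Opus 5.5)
The plan is to prove the two directions separately, with sufficiency handled by an explicit max-of-affine construction.

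\emph{Necessity.} Suppose $f$ is $M$-Lipschitz, convex, and interpolates the data, so $g_i\in\partial f(x_i)$ and $f(x_i)=f_i$. The subgradient inequality $f(x_i)\geq f(x_j)+\langle g_j,x_i-x_j\rangle$ is exactly $\mathcal{C}_{i,j}\geq 0$. For $\mathcal{M}_i\geq 0$, we use that subgradients of an $M$-Lipschitz convex function have norm at most $M$. Indeed, if $g\in\partial f(x)$ and $g\neq 0$, set $y=x+g/\|g\|$; then
\[
\|g\| = \langle g,y-x\rangle \le f(y)-f(x)\le M\|y-x\| = M .
\]
Hence $\|g_i\|\le M$. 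The index $\star$ with $g_\star=0$ needs no special treatment.

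\emph{Sufficiency.} Assume all $\mathcal{C}_{i,j}\ge 0$ and $\mathcal{M}_i\ge 0$, and define
\[
 f(x) = \max_{j\in\cI}\bigl\{ f_j + \langle g_j, x-x_j\rangle \bigr\}.
\]
This function is convex as a finite maximum of affine functions. It is $M$-Lipschitz, since each affine piece has slope norm $\|g_j\|\le M$ and a pointwise maximum of $M$-Lipschitz functions is $M$-Lipschitz. For interpolation, evaluate at $x_i$. The $j=i$ term gives $f_i$, while every other term satisfies $f_j+\langle g_j,x_i-x_j\rangle\le f_i$ precisely because $\mathcal{C}_{i,j}\ge 0$. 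Thus $f(x_i)=f_i$ and the $i$-th piece is active at $x_i$, so $g_i\in\partial f(x_i)$.

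\emph{Caveats.} If the data contain repeated points $x_i=x_j$, the argument still goes through: $\mathcal{C}_{i,j},\mathcal{C}_{j,i}\ge 0$ force $f_i=f_j$, and both $g_i,g_j$ are active subgradients there. The main step is choosing the right interpolant; once the max-of-affine function is written down, everything else is routine. This contrasts with the smooth case, where a more delicate construction is required.
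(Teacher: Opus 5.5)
Your proof is correct. Necessity follows from the subgradient inequality together with the bound $\|g\|\le M$ on subgradients of an $M$-Lipschitz convex function, and sufficiency follows from the max-of-affine interpolant $x\mapsto\max_{j\in\cI}\{f_j+\langle g_j,x-x_j\rangle\}$, whose pieces all have slope norm at most $M$. The paper does not prove this statement itself but imports it from Taylor, Hendrickx, and Glineur, and your argument is essentially the standard one behind that result.
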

\begin{theorem}[Theorem 4 of~\cite{taylor2017InterpolationFMuLPEP}]\label{thm:SmoothConvexInterpolation}
    The first-order observations $(x_i,f_i,g_i)_{i\in\cI}$ can be interpolated by an $L$-smooth convex function if and only if $\mathcal{Q}_{i,j}\geq 0$ for all $i,j\in\cI$.
\end{theorem}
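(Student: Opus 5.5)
The statement is the smooth convex interpolation theorem (Theorem~\ref{thm:SmoothConvexInterpolation}), so I would prove both directions directly, reducing the hard one to the Lipschitz convex case via conjugation.

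\textbf{Necessity.} Assume $f$ is $L$-smooth and convex. For any $j$, the function $h(x)=f(x)-f_j-\langle g_j,x-x_j\rangle$ has the following properties:
\begin{itemize}
\item it is convex, $L$-smooth and nonnegative;
\item it is minimized at $x_j$ with value $0$.
\end{itemize}
The descent lemma applied at $x_i$ with the step $x_i-\frac1L\nabla h(x_i)$ gives
\[
0\le h\!\left(x_i-\tfrac1L\nabla h(x_i)\right)\le h(x_i)-\tfrac{1}{2L}\|\nabla h(x_i)\|^2 .
\]
Since $\nabla h(x_i)=g_i-g_j$, this is exactly $\mathcal{Q}_{i,j}\ge 0$.

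\textbf{Sufficiency via conjugates.} Assume $\mathcal{Q}_{i,j}\ge0$ for all $i,j\in\cI$. Set $\tilde f_i=f_i-\frac{1}{2L}\|x_i\|^2$ and $\tilde g_i=g_i-\frac1L x_i$, the data of $\tilde f=f-\frac{1}{2L}\|\cdot\|^2$ (wait: this is the wrong direction for smooth convex). So instead I use the standard route. A function $f$ is $L$-smooth convex iff its conjugate $f^*$ is $\frac1L$-strongly convex, and $g\in\partial f(x)$ iff $x\in\partial f^*(g)$. I therefore pass to the dual data $(g_i,\; \langle g_i,x_i\rangle-f_i,\; x_i)$.

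A direct computation shows that $\mathcal{Q}_{i,j}\ge0$ is exactly the strongly convex interpolation condition for these dual points:
\[
f^*_j\ \ge\ f^*_i+\langle x_i,g_j-g_i\rangle+\tfrac{1}{2L}\|g_j-g_i\|^2 .
\]
Strongly convex interpolation then reduces to plain convex interpolation. Subtracting $\frac{1}{2L}\|\cdot\|^2$ from the dual data turns the above inequality into the ordinary subgradient inequality $\mathcal{C}$-type condition for the shifted points. Convex interpolation holds by the max-of-affine construction $\phi(y)=\max_i\{\phi_i+\langle s_i,y-y_i\rangle\}$, which interpolates exactly because the $\mathcal{C}$ inequalities guarantee that each affine piece is active at its own point.

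Adding back the quadratic gives a $\frac1L$-strongly convex $h$ interpolating the dual data. Then $F=h^*$ is convex, $L$-smooth and finite everywhere. By Fenchel--Young equality, $\nabla F(x_i)=g_i$ and $F(x_i)=f_i$.

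\textbf{Main obstacle.} The delicate step is verifying that the conjugated interpolant reproduces $f_i$ and $g_i$ exactly rather than merely bounding them. This needs $x_i\in\partial h(g_i)$, which holds because the max-of-affine function has $s_i$ in its subdifferential at $y_i$. It also needs the subgradient correspondence under conjugation, which holds since $h$ is closed and proper.

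As a hedge I would also check the index $\star$, which needs no special treatment because $g_\star=0$ is just another data point. Everything else is routine algebra expanding norms.
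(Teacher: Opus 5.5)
Your proof is correct. The paper does not prove this statement; it imports it from Taylor--Hendrickx--Glineur. Your sufficiency argument is essentially the standard one from that reference specialized to $\mu=0$: pass to the dual data $(g_i,\langle g_i,x_i\rangle-f_i,x_i)$, where $\mathcal{Q}_{i,j}\geq 0$ becomes $\frac{1}{L}$-strongly convex interpolation; subtract $\frac{1}{2L}\|\cdot\|^2$; take the max-of-affine interpolant; add the quadratic back; and conjugate using Fenchel--Young. Your descent-lemma proof of necessity is a clean self-contained alternative. In the final write-up, delete the abandoned ``wait: this is the wrong direction'' aside about $f-\frac{1}{2L}\|\cdot\|^2$.
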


Using these interpolation theorems, Taylor, Hendrickx, and Glineur showed that one can equivalently cast the problem of computing a worst-case instance as computing worst-case first-order oracle values satisfying these conditions. For example, for a FSFOM with weights $W$, the worst-case final objective gap over $M$-Lipschitz convex problems is equal to
\begin{align}
    \mathtt{PEP}_{M}(W) &:= \begin{cases}
        \max_{f,(x_i,g_i)_{i\in\cI}} &f(x_N)-f(x_\star)\\
        \mathrm{s.t.} & x_n = x_0 - \sum_{i=0}^{n-1}W_{n-1,i}g_i \quad \forall n=1,\dots,N\\
        & g_i\in\partial f(x_i) \qquad \forall i\in\cI\\
        & f\text{ is convex, $M$-Lipschitz, and minimized at $x_\star$}\\
        & \|x_0-x_\star\|\leq D
    \end{cases} \nonumber\\
    &= \begin{cases}
        \max_{(x_i,f_i,g_i)_{i\in\cI}} & f_N - f_\star\\
        \mathrm{s.t.} & x_n = x_0 - \sum_{i=0}^{n-1}W_{n-1,i}g_i \quad \forall n=1,\dots,N\\
        & g_\star = 0\\
        & \mathcal{C}_{i,j} \geq 0 \qquad \forall i,j\in\cI\\
        & \mathcal{M}_i \geq 0 \qquad \forall i\in\cI\\
        & \mathcal{D} \geq 0.
    \end{cases} \label{eq:subgradient-pep}
\end{align}
This problem can be written as a semidefinite program. To see this, consider substituting the defining equalities for $g_\star$ and $x_n$, $n=1,\dots,N$, in the remaining constraints. The resulting problem is linear in the function values $f=(f_\star,f_0,\dots,f_N)$ and linear in the quadratic form
\begin{equation}
    G = P^\top P, \qquad P = \begin{bmatrix}
        x_0-x_\star, g_0,\dots, g_N 
    \end{bmatrix}
\end{equation}
which must be positive semidefinite by definition. Provided $d\geq N+2$, any positive semidefinite $G$ can be factored to recover the underlying $P$. Hence, for $d$ large enough, $\mathtt{PEP}_{M}(W)$ is a semidefinite program. For $d<N+2$, the resulting semidefinite program still provides a valid upper bound.

The dual of this semidefinite program then optimizes over upper bounds $r$ on the worst-case value of $f_N - f_\star$. This dual selects nonnegative multipliers $\lambda_{i,j}$, $\gamma_{i}$, and $\sigma$ for the inequality constraints and a positive semidefinite multiplier $Z$ for the $G \succeq 0$ constraint.
These multipliers are dual feasible if they satisfy the polynomial identity
\begin{equation*}
    r - (f_N - f_\star) = \sum_{i,j\in\cI} \lambda_{i,j} \mathcal{C}_{i,j} + \sum_{i\in\cI} \gamma_i\mathcal{M}_i + \sigma\mathcal{D} + \langle Z, G\rangle,
\end{equation*}
fixing $x_n = x_0 - \sum_{i=0}^{n-1}W_{n-1,i}g_i$ and $g_\star=0$. Here in this identity, we treat the values $f_i$ and the entries of $G$ as free variables. In these terms, the identity is affine. Note this identity combined with the nonnegativity of the right-hand-side proves the guarantee $r$.
If strong duality (with dual attainment) holds, then $r=\mathtt{PEP}_{M}(W)$ and 
an optimal solution $(\lambda_{i,j},\gamma_i,\sigma,Z)$ provides a structured proof of this convergence rate.

Analogous primal and dual semidefinite programs can be constructed for $L$-smooth convex functions: simply replace the constraints $\cC_{i,j}\geq 0$ and $\cM_i\geq 0$ with the constraints $\cQ_{i,j}\geq 0$.

\section{The Primal-Dual View of Unconstrained Minimization}  \label{sec:primal-dual}
Here, we develop convergence theory in terms of stronger primal-dual guarantees than the primal-only objective guarantees natural in the PEP framework. Our development follows the primal-dual minimax perspective taken by~\cite{GrimmerLi2025} in the analysis of certain subgradient methods. At first glance, an unconstrained convex minimization problem of the form~\eqref{eq:main-problem} does not possess a natural primal-dual perspective. However, for full domain convex functions (which must be closed convex and proper), one can describe $f$ as the supremum over its affine minorants~\cite[Theorem 12.1]{Rockafellar1970}. Letting $A_f$ denote the set of all affine minorants
\begin{equation*}
    A_f = \{\alpha \colon \R^d\rightarrow \R\mid \alpha(x)=\langle a,x\rangle+b,\ \alpha \leq f\},
\end{equation*}
one has that $f(x) = \sup_{\alpha\in A_f}\alpha(x)$. Hence, our basic minimization problem can be written as
\begin{equation*}
    f_\star = \min_x f(x) = \min_x\sup_{\alpha\in A_f} \alpha(x).
\end{equation*}
Given any $\alpha\in A_f$, a lower bound on $f_\star$ is immediately provided by $\alpha(x_\star)$. A lower bound independent of $x_\star$ can be produced by considering the dual maximin problem of
\begin{equation*}
    f_\star = \min_x\sup_{\alpha\in A_f} \alpha(x) \geq \sup_{\alpha\in A_f} \min_x \alpha(x) = \sup_{\alpha\in A_f} \begin{cases}
        \alpha(x_0) & \text{if }\nabla \alpha =0\\
        -\infty & \text{otherwise}
    \end{cases} =: d_\star
\end{equation*}
where the second equality observes that the minimum of any nonconstant affine function is $-\infty$. Note that since $\alpha$ is affine, we omit the evaluation point from the gradient $\nabla \alpha$ in our notation.

From the above calculation, the dual problem seeks the largest constant function (i.e., $\nabla \alpha=0$) underneath $f$. The dual maximum is attained by $\alpha_\star(x) = \langle 0,x\rangle+f_\star$. So strong duality holds, with $f_\star=d_\star$. The above dual is too simple to provide a meaningful source of lower bounding certificates, however, as the bound provided by a dual certificate $\alpha$ is $-\infty$ for all but constant functions.

This primal-dual lens can be enriched in two ways.
First, when a bound $D$ on the primal distance to optimal is known, one can add the primal constraint $\|x - x_0\|\leq D$. 
Second, we may relax the dual constraint $\alpha\in A_f$ to only requiring that $\alpha(x_\star)\leq f(x_\star)$ (i.e., $\alpha$ lower bounds $f$ at its minimizer).
The former will be useful in addressing $M$-Lipschitz settings, while the latter will find use in $L$-smooth settings. The following two subsections develop these models.

\subsection{Stronger Certificates Given a Primal Distance Bound $D$}
Given some $D>0$ such that the problems of interest have a minimizer $x_\star$ with bounded distance $\|x_0-x_\star\|\leq D$, we can strengthen our dual. Without loss, we can add the primal minimization constraint that $x$ lies in the ball $\|x - x_0\|\leq D$. Hence, we can write a primal-dual problem pair for a given problem instance with bounded distance as
\begin{equation}
    f_\star = \min_{\|x-x_0\|\leq D} \sup_{\alpha\in A_f} \alpha(x) \geq \sup_{\alpha\in A_f} \min_{\|x-x_0\|\leq D} \alpha(x) =: d_\star. \label{eq:main-primal-dual-pair}
\end{equation}
Since $\alpha$ is affine, we can solve the inner minimization over $x$ in closed form. It is attained at $x=x_0-D \nabla \alpha/\|\nabla \alpha\|$ for any given nonconstant $\alpha$, making the dual problem $\sup_{\alpha\in A_f} \alpha(x_0) - D\|\nabla \alpha\|$.
Again, strong duality holds here, attained by the constant function $\alpha_\star(x) = \langle 0,x\rangle + f_\star$.

From~\eqref{eq:main-primal-dual-pair}, we have a structured primal-dual pair of outputs that algorithms can provide to certify solution quality. Given a dual solution $\alpha\in A_f$, one can lower bound $f_\star$ by the following two bounds
\begin{equation}\label{eq:dual-bounding-sequence}
    f_\star \geq \alpha(x_\star) \geq \min_{\|x-x_0\|\leq D} \alpha(x)
\end{equation}
where the first is tighter but the second is independent of $x_\star$. Hence, given a primal solution $x$ and a dual solution $\alpha$, the primal objective gap at $x$ is bounded by the primal-dual gaps of
\begin{equation}\label{eq:primal-dual-gap}
    f(x) - f(x_\star) \leq \underbrace{f(x) - \alpha(x_\star)}_{\text{Primal-Dual Gap}} \leq \underbrace{f(x) - \min_{\|x-x_0\|\leq D} \alpha(x)}_{\text{Computable Primal-Dual Gap}}.
\end{equation}
Provided a bound $D$ is known, this final quantity is computable and independent of $x_\star$.

Constructing elements of $A_f$ is straightforward for first-order methods. Each first-order oracle query directly provides an affine minorant by the definition of a $g_i\in\partial f(x_i)$. Namely, one must have
$$ f(x) \geq f(x_i) + \langle g_i, x-x_i\rangle \qquad \forall x\in\R^d.$$
So at every iteration, an affine minorant $\alpha(x) = f(x_i) + \langle g_i, x-x_i\rangle$ is observed. Moreover, noting that the set $A_f$ is convex, any convex combination of subgradient lower bounds can provide a candidate dual solution $\alpha$.

\subsection{Certificates via ``Minorants at the Minimizer'' Given Smoothness}
Smoothness enables an even stronger class of dual models to be constructed for our unconstrained minimization. For $L$-smooth convex functions, each gradient provides a stronger lower bound at the minimizer than the simple subgradient inequality. Namely, the cocoercive-type inequality (i.e., $\mathcal{Q}_{\star,i}\geq 0$ in~\eqref{eq:cocoercive}) ensures that every gradient $g_i = \nabla f(x_i)$ has
$$ f(x_\star) \geq f(x_i) + \langle g_i, x_\star-x_i\rangle + \frac{1}{2L}\|g_i\|^2.$$
Note this inequality only holds at $x_\star$, not all $x$. So at every iteration, gradient methods obtain an affine ``minorant at the minimizer'' given by 
$\alpha(x) = f(x_i) + \frac{1}{2L}\|g_i\|^2 + \langle g_i, x-x_i\rangle$.

(This extra term in the gradient norm squared was recently observed by~\cite{FloreaOptimalLowerBound} to be key to strengthening momentum methods to attain the exactly optimal convergence rate of OGM.)

For our primal-dual reasoning above, all that is needed for $\alpha$ is that it is no greater than $f$ at $x_\star$ for~\eqref{eq:dual-bounding-sequence} to hold. Hence, for $L$-smooth convex functions, one can consider any affine ``minorant at the minimizer'', denoted
\begin{equation*}
    A_f^\star = \{\alpha \colon \R^d\rightarrow \R\mid \alpha(x)=\langle a,x\rangle+b,\ \alpha(x_\star) \leq f(x_\star)\}.
\end{equation*}
We refer to such $\alpha\in A_f^\star$ as an affine ``minorant at the minimizer'', given that it is only guaranteed to lie below $f$ at $x_\star$.

Note then that $\sup_{\alpha\in A_f^\star} \alpha(x)$ is $+\infty$ for all inputs other than $x_\star$, where it takes value $f_\star$. Although this supremum is typically different from $f$, minimizing it still returns $f_\star$. So we have strong duality between the primal-dual pair $\min_{\|x-x_0\|\leq D} \sup_{\alpha\in A_{f}^\star} \alpha(x) = \sup_{\alpha\in A_{f}^\star} \min_{\|x-x_0\|\leq D} \alpha(x)$.

Again, observing that $A_f^\star$ is convex, one can construct dual solutions from any first-order method by aggregating the affine ``minorants at the minimizer'' generated by observed gradients at runtime $\alpha(x) = f(x_i) + \frac{1}{2L}\|g_i\|^2 + \langle g_i, x-x_i\rangle$.

\section{Existence of Dual Certificates and Minimizer Estimates} \label{sec:general-results}

In this section, we consider any first-order method (not necessarily fixed-step, deterministic, or otherwise structured). Suppose the method has been run on some problem instance, observing first-order information $(x_i,f_i,g_i)_{i=0}^N$ and reporting a primal objective guarantee against all instances consistent with the given first-order observations of
\begin{equation} \label{eq:simple-primal-guarantee}
    f_N - f_\star \leq r.
\end{equation}
Note that just as the method is left general, how one derives this convergence guarantee is left general. It need not come from a structured PEP proof of the form discussed in Section~\ref{sec:pep}. Any procedure resulting in a uniform guarantee against every problem instance consistent with the fixed, observed values $(x_i,f_i,g_i)_{i=0}^N$ is fine.

Here we will show that, in both the nonsmooth and smooth settings, one can always deduce stronger primal-dual guarantees from this observed information. The key object in both settings will be to consider all possible placements of $x_\star$ consistent with the first-order information. Optimizing over $x_\star$'s placement will yield dual insights.

\subsection{Dual Certificates in Nonsmooth Optimization}
First we consider convex (potentially nonsmooth) minimization given a distance bound $D>0$. Here we do not make any Lipschitz continuity assumptions, although these will be required in Section~\ref{sec:subgrad-development} when we consider fixed-step methods. Suppose an algorithm observed first-order information $(x_i,f_i,g_i)_{i=0}^N$. We know that whatever values $(x_\star, f_\star)$ take, they must satisfy $\mathcal{C}_{i,\star} \geq 0$ and $\mathcal{C}_{\star,i} \geq 0$ for each $i=0,\dots,N$. From this, the worst possible final objective gap is given by
\begin{equation} \label{eq:objective-gap-maximizing}
	r_\star = \begin{cases}
		\max_{x_\star, f_\star} & f_N - f_\star\\
		\mathrm{s.t.} & D^2 - \|x_0-x_\star\|^2 \geq 0\\
		& f_\star - f_i -\langle g_i, x_\star-x_i\rangle \geq 0 \qquad \forall i=0,\dots,N\\
		&  f_i - f_\star \geq 0 \qquad \forall i=0,\dots,N.
	\end{cases}
\end{equation}
Note this is not a relaxation, but exact, as given maximizers $x_\star,f_\star$, one can set $g_\star=0$ and apply the Interpolation Theorem~\ref{thm:LipschitzConvexInterpolation} to construct an instance realizing this worst-case gap. So $r_\star\leq r$.

Let $\ell_i(x)$ denote $f_i + \langle g_i, x - x_i\rangle$, the affine minorant of any interpolating $f$ provided by the subgradients $g_i$. Note $\mathcal{C}_{\star,i} \geq 0$ ensures that $f_\star \geq \ell_i(x_\star)$ for each $i=0,\dots,N$. Taking the maximum over indices $i$, we have that $f$ must lie above the cutting-plane model constructed from each of these affine minorants. In particular, for any placement of $x_\star$, we have
$$ f(x_\star) \geq \max_{i=0,\dots,N} \ell_i(x_\star). $$

Below we find that the maximum final objective gap (i.e., smallest possible final value of $f(x_\star)$), consistent with the observed first-order information, is given by minimizing over all placements of $x_\star$ in this cutting plane model. By resolving the associated minimax problem, we arrive at guarantees of our strengthened primal-dual forms.

\begin{theorem}
	Consider any observed first-order information $(x_i,f_i,g_i)_{i=0}^N$ consistent with some convex function possessing a minimizer with $\|x_0-x_\star\|\leq D$. 
	Then there exist nonnegative weights $a_i$, summing to one, such that $\alpha(x) = \sum_{i=0}^N a_i (f_i + \langle g_i, x - x_i\rangle) \in A_f$ has
	\begin{align}
		\underbrace{f_N - f_\star}_{\text{Primal Objective Gap}} &\leq \underbrace{f_N - f_\star}_{\text{Primal Objective Gap}} + \underbrace{\sigma\|z_{N+1} - x_\star\|^2}_{\text{Minimizer Estimate}} \\
		&\leq \underbrace{f_N - \alpha(x_\star)}_{\text{Primal-Dual Gap}} + \underbrace{\sigma\|z_{N+1} - x_\star\|^2}_{\text{Minimizer Estimate}} \\
		&\leq \underbrace{f_N - \min_{\|x-x_0\|\leq D} \alpha(x)}_{\text{Computable Primal-Dual Gap}}\\
		&\leq r_\star
	\end{align}
	where if $\nabla \alpha\neq 0$, $\sigma = \frac{\|\nabla \alpha\|}{2D}$, $z_{N+1} = x_0 - \frac{1}{2\sigma}\nabla \alpha$, otherwise $\sigma=0$ and $z_{N+1}$ can be set arbitrarily.
\end{theorem}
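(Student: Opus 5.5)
The plan is to obtain $\alpha$ from a minimax argument over placements of $x_\star$ in the cutting-plane model, and then verify the chain of inequalities from top to bottom.

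\textbf{Step 1: the minimax value.} Since $f_i - f_\star \geq 0$ only enters as an upper bound on $f_\star$, the worst case in \eqref{eq:objective-gap-maximizing} takes $f_\star$ as small as allowed, namely $f_\star=\max_i \ell_i(x_\star)$. Hence
\begin{equation*}
r_\star \geq f_N - \min_{\|x-x_0\|\leq D}\max_{i}\ell_i(x).
\end{equation*}
To justify the constraint $f_i\geq f_\star$ at this placement, note $f_i=\ell_i(x_i)\geq$ nothing directly. Instead I will argue $\min_{\|x-x_0\|\le D}\max_i \ell_i(x) \le \ell_j(x_\star^{\rm true}) \le f(x_\star^{\rm true})\le f_i$, using the consistent convex function. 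So I can replace $f_\star$ by $\min(\max_i\ell_i(x),\min_j f_j)$, and the minimizer of the max-model over the ball stays feasible after capping.

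\textbf{Step 2: Sion's minimax theorem.} I write $\max_i\ell_i(x)=\max_{a\in\Delta}\sum_i a_i\ell_i(x)$ and exchange min and max, which is valid since the ball and simplex are compact convex and the objective is bilinear. This yields $a\in\Delta$ with
\begin{equation*}
\min_{\|x-x_0\|\leq D}\alpha(x)=\min_{\|x-x_0\|\le D}\max_i\ell_i(x), \qquad \alpha=\sum_i a_i\ell_i .
\end{equation*}
The function $\alpha$ is in $A_f$ as a convex combination of subgradient minorants, and the last inequality $f_N-\min_{\|x-x_0\|\le D}\alpha(x)\le r_\star$ follows from Step 1.

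\textbf{Step 3: the middle inequalities.} The first inequality is trivial since $\sigma\ge0$, and the second follows from $\alpha(x_\star)\le f_\star$. For the third I need
\begin{equation*}
-\alpha(x_\star)+\sigma\|z_{N+1}-x_\star\|^2\le -\min_{\|x-x_0\|\leq D}\alpha(x)=-\alpha(x_0)+D\|\nabla\alpha\|.
\end{equation*}
I expand with $z_{N+1}=x_0-\nabla\alpha/(2\sigma)$:
\begin{equation*}
\sigma\|x_0-x_\star-\nabla\alpha/(2\sigma)\|^2=\sigma\|x_0-x_\star\|^2-\langle\nabla\alpha,x_0-x_\star\rangle+\frac{\|\nabla\alpha\|^2}{4\sigma},
\end{equation*}
and use $\alpha(x_0)-\alpha(x_\star)=\langle\nabla\alpha,x_0-x_\star\rangle$. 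The linear terms cancel, and the left side becomes $-\alpha(x_0)+\sigma\|x_0-x_\star\|^2+\|\nabla\alpha\|^2/(4\sigma)$. Substituting $\sigma=\|\nabla\alpha\|/(2D)$ and $\|x_0-x_\star\|\le D$ bounds this by $-\alpha(x_0)+D\|\nabla\alpha\|/2+D\|\nabla\alpha\|/2$, which is exactly what is needed. When $\nabla\alpha=0$ the claim is immediate.

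\textbf{Main obstacle.} The delicate step is Step 1, namely showing that the min-over-ball placement with $f_\star=\max_i\ell_i$ is genuinely feasible for \eqref{eq:objective-gap-maximizing}, including $f_i\ge f_\star$. The argument via the true consistent minimizer handles the value comparison. Alternatively, I would note that if the capped value is used the gap only increases, which still gives the needed inequality $r_\star\ge f_N-\min\max_i\ell_i$.
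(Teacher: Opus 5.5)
Your proposal is correct and follows essentially the same route as the paper. You justify the $f_i\ge f_\star$ constraints through the consistent function's true minimizer, exchange the minimum over the ball with the maximum over the simplex in the cutting-plane model to obtain $a$, and complete the square with $\sigma=\|\nabla\alpha\|/(2D)$. The paper shows $r_\star$ equals the minimax value, whereas you show only $\geq$, which is all the statement needs.

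Your main feasibility chain $\min_{\|x-x_0\|\le D}\max_i\ell_i(x)\le\max_i\ell_i(\hat x_\star)\le \hat f(\hat x_\star)\le f_i$ already settles Step 1 (note it needs $\max_i\ell_i$, not a single $\ell_j$). So the ``capping'' alternative is unnecessary, and as stated it would fail: lowering $f_\star$ below $\max_i\ell_i(x_\star)$ violates $\mathcal{C}_{\star,i}\ge 0$.
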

\begin{proof}
	First, notice that since $(x_i,f_i,g_i)_{i=0}^N$ is consistent with some function $\hat{f}$ minimized at $\hat{x}_\star$, the problem~\eqref{eq:objective-gap-maximizing} is feasible---consider setting $f_\star=\hat{f}(\hat{x}_\star)$ and $x_\star=\hat{x}_\star$.
	Moreover, the final constraints in~\eqref{eq:objective-gap-maximizing}, corresponding to $\mathcal{C}_{i,\star} \geq 0$, can be omitted without loss. This follows since any global maximizer with these omitted has $f_\star \leq \hat{f}(\hat{x}_\star)$, so $f_i - f_\star = (f_i - \hat{f}(\hat{x}_\star)) + (\hat{f}(\hat{x}_\star) - f_\star)\geq 0$ and so is feasible for the full problem. 
	
	Thus we can simplify the definition of $r_\star$ as follows
	\begin{align*}
		r_\star
		&= \max_{\|x_\star-x_0\| \leq D} \min_{i=0,\dots,N} f_N - \ell_i(x_\star)\\
		&= \max_{\|x_\star-x_0\| \leq D} \min_{a\in\Delta_{N+1}} f_N - \sum_{i=0}^N a_i \ell_i(x_\star)\\
		&= \min_{a\in\Delta_{N+1}} \max_{\|x_\star-x_0\| \leq D} f_N - \sum_{i=0}^N a_i \ell_i(x_\star)\\
		&= \min_{a\in\Delta_{N+1}} f_N - \alpha_a(x_0) + D\|\nabla \alpha_a\|
	\end{align*}
	where the first equality sets $f_\star$ to its minimum allowable value, the second replaces the finite minimum with minimization over the simplex $\Delta_{N+1}=\{a\in\mathbb{R}^{N+1} \mid \sum_{i=0}^N a_i = 1, a\geq 0\}$, the third line notes that this final problem is a compact, convex-concave minimax problem, so we can exchange the order of minimization and maximization, and the final line computes the minimum of this affine function $\alpha_a = \sum_{i=0}^N a_i\ell_i$ over a ball.
	
	Note that since the simplex is compact, some minimizing $a\in\Delta_{N+1}$ must exist.
	If this optimal $a$ has $\nabla \alpha_a=0$, setting $\sigma=0$, the claimed sequence of inequalities all hold immediately. Now suppose $\nabla \alpha_a\neq 0$. Then the claimed inequalities hold with $\sigma=\frac{\|\nabla \alpha_a\|}{2D}$ and $z_{N+1}=x_0-\frac{1}{2\sigma}\nabla \alpha_a$ as
	\begin{align*}
		r_\star &= f_N - \alpha_a(x_0) + \|\nabla \alpha_a\|D\\
		&= f_N - \alpha_a(x_0) + \sigma D^2 + \frac{1}{4\sigma}\| \nabla \alpha_a \|^2\\
		&= f_N - \alpha_a(x_\star) + \sigma (D^2 - \|x_0-x_\star\|^2) + \sigma \|z_{N+1}-x_\star\|^2\\
		&\geq f_N - \alpha_a(x_\star) + \sigma \|z_{N+1}-x_\star\|^2\\
		&\geq f_N - f_\star + \sigma \|z_{N+1}-x_\star\|^2.\qedhere
	\end{align*}
\end{proof}

\subsection{Dual Certificates in Smooth Convex Optimization}
We now repeat the above development for the case where the observed first-order information comes from an $L$-smooth convex function.

As before, we consider all possible placements of $(x_\star,f_\star)$ consistent with the available first-order information. Recalling the necessary $\mathcal{Q}$ inequalities from~\eqref{eq:cocoercive} (which are sufficient for interpolation), any placement of the minimizer and minimum value $(x_\star,f_\star)$ must satisfy, for each $i=0,\dots,N$,
\begin{align*}
    \mathcal{Q}_{\star,i}
    &=
    f_\star - f_i - \langle g_i, x_\star-x_i\rangle
    -\frac{1}{2L}\|g_i\|^2
    \geq 0,\\
    \mathcal{Q}_{i,\star}
    &=
    f_i - f_\star
    -\frac{1}{2L}\|g_i\|^2
    \geq 0,
\end{align*}
together with the distance bound $\mathcal{D}=D^2-\|x_0-x_\star\|^2\geq 0$. So the largest final objective gap compatible with the observed first-order information is
\begin{equation} \label{eq:smooth-objective-gap-maximizing}
    r_\star = \begin{cases}
        \max_{x_\star,f_\star} & f_N - f_\star\\
        \mathrm{s.t.} &
        f_\star - f_i - \langle g_i, x_\star-x_i\rangle
        -\dfrac{1}{2L}\|g_i\|^2
        \geq 0
        \qquad \forall i=0,\dots,N,\\[0.4em]
        &
        f_i - f_\star
        -\dfrac{1}{2L}\|g_i\|^2
        \geq 0
        \qquad \forall i=0,\dots,N,\\[0.4em]
        &
        D^2-\|x_0-x_\star\|^2 \geq 0.
    \end{cases}
\end{equation}

The direct interpretation of~\eqref{eq:smooth-objective-gap-maximizing} is again that $f_N-f_\star\leq r_\star$, so any reported primal guarantee $f_N-f_\star\leq r$ must have $r\geq r_\star$. 

Comparing \eqref{eq:smooth-objective-gap-maximizing} with \eqref{eq:objective-gap-maximizing}, we see that the only difference is replacing $f_i$ with $f_i + \frac{1}{2L}\|g_i\|^2$ in the first inequalities and replacing $f_i$ with $f_i - \frac{1}{2L}\|g_i\|^2$ in the second inequalities. Again, considering a function $\hat{f}$ consistent with the given information, one can conclude this problem is feasible and that the second collection of inequalities corresponding to $\mathcal{Q}_{i,\star}\geq 0$ can be omitted.

Thus, by taking the same minimax view as before, stronger guarantees can again be extracted. We omit its proof, which follows verbatim after making the above substitutions and considering affine minorants at the minimizer $m_i(x) = f_i + \frac{1}{2L}\|g_i\|^2 + \langle g_i, x-x_i\rangle$ instead of $\ell_i$.

\begin{theorem}
    Consider any observed first-order information $(x_i,f_i,g_i)_{i=0}^N$ consistent with some $L$-smooth convex function possessing a minimizer with $\|x_0-x_\star\|\leq D$. Then 
    there exist nonnegative weights $a_i$, summing to one, such that $\alpha(x) = \sum_{i=0}^N a_i (f_i +\frac{1}{2L}\|g_i\|^2 + \langle g_i, x - x_i\rangle) \in A_f^\star$ has 
    \begin{align}
    \underbrace{f_N - f_\star}_{\text{Primal Objective Gap}} &\leq \underbrace{f_N - f_\star}_{\text{Primal Objective Gap}} + \underbrace{\sigma\|z_{N+1} - x_\star\|^2}_{\text{Minimizer Estimate}} \\
    &\leq \underbrace{f_N - \alpha(x_\star)}_{\text{Primal-Dual Gap}} + \underbrace{\sigma\|z_{N+1} - x_\star\|^2}_{\text{Minimizer Estimate}} \\
    &\leq \underbrace{f_N - \min_{\|x-x_0\|\leq D} \alpha(x)}_{\text{Computable Primal-Dual Gap}}\\
    &\leq r_\star
    \end{align}
    where if $\nabla\alpha\neq 0$, $\sigma = \frac{\|\nabla \alpha\|}{2D}$, $z_{N+1} = x_0 - \frac{1}{2\sigma}\nabla \alpha$, otherwise $\sigma=0$ and $z_{N+1}$ can be set arbitrarily.
\end{theorem}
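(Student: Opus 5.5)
The plan mirrors the proof of the nonsmooth theorem, with $\ell_i$ replaced by the minorants at the minimizer $m_i(x) = f_i + \frac{1}{2L}\|g_i\|^2 + \langle g_i, x-x_i\rangle$.

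\textbf{Step 1: simplify the program.} First I establish that~\eqref{eq:smooth-objective-gap-maximizing} is feasible. Let $\hat f$ be the assumed $L$-smooth convex function consistent with the data, minimized at $\hat x_\star$. By Theorem~\ref{thm:SmoothConvexInterpolation}, the point $(\hat x_\star, \hat f(\hat x_\star))$ with $g_\star = 0$ satisfies all $\mathcal{Q}$ constraints. Next I drop the constraints $\mathcal{Q}_{i,\star}\geq 0$. Take any maximizer of the relaxed problem. Its value $f_\star$ is at most $\hat f(\hat x_\star)$, because $\hat x_\star$ is feasible for the relaxation and the objective is $f_N - f_\star$. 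Hence
\[
f_i - f_\star - \tfrac{1}{2L}\|g_i\|^2 \;\geq\; f_i - \hat f(\hat x_\star) - \tfrac{1}{2L}\|g_i\|^2 \;\geq\; 0,
\]
the last inequality being $\mathcal{Q}_{i,\star}\geq 0$ for $\hat f$. So the dropped constraints hold automatically and the relaxation is exact. Setting $f_\star$ to its smallest allowed value, $\max_i m_i(x_\star)$, gives
\[
r_\star = \max_{\|x_\star - x_0\|\leq D} \min_{i} \bigl(f_N - m_i(x_\star)\bigr).
\]

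\textbf{Step 2: minimax exchange.} I replace the finite minimum by a minimum over $a\in\Delta_{N+1}$ of $f_N - \alpha_a(x_\star)$, where $\alpha_a=\sum_i a_i m_i$. The objective is bilinear, and both the simplex and the ball are compact and convex, so Sion's theorem lets me exchange min and max. The inner maximization of an affine function over the ball is explicit, giving
\[
r_\star = \min_{a\in\Delta_{N+1}} \; f_N - \alpha_a(x_0) + D\|\nabla\alpha_a\|.
\]
Compactness gives an optimal $a$, and I set $\alpha=\alpha_a$. This $\alpha$ lies in $A_f^\star$: each $m_i(x_\star)\leq f(x_\star)$ by $\mathcal{Q}_{\star,i}\geq 0$, and $A_f^\star$ is convex. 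This holds for any true instance $(f,x_\star)$ consistent with the data.

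\textbf{Step 3: the chain of inequalities.} The third inequality in the statement is exactly $\min_{\|x-x_0\|\leq D}\alpha(x) = \alpha(x_0) - D\|\nabla\alpha\|$ combined with Step 2. If $\nabla\alpha = 0$, then $\sigma=0$ and everything reduces to $f_\star\geq \alpha(x_\star)=\min_{\|x-x_0\|\leq D}\alpha(x)$. Otherwise set $\sigma = \|\nabla\alpha\|/(2D)$ and $z_{N+1} = x_0 - \nabla\alpha/(2\sigma)$. Then
\[
D\|\nabla\alpha\| = \sigma D^2 + \tfrac{1}{4\sigma}\|\nabla\alpha\|^2 .
\]
Expanding $\sigma\|z_{N+1}-x_\star\|^2$ and using linearity of $\alpha$ gives the identity
\[
r_\star = f_N - \alpha(x_\star) + \sigma\bigl(D^2-\|x_0-x_\star\|^2\bigr) + \sigma\|z_{N+1}-x_\star\|^2 .
\]
Dropping the nonnegative term $\sigma\mathcal{D}$ gives the second-to-last inequality. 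Using $\alpha(x_\star)\leq f_\star$ gives the remaining two.

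\textbf{Main obstacle.} The only step needing care is Step 1, namely that the omitted constraints $\mathcal{Q}_{i,\star}\geq 0$ do not change the value. I handle it as above, by comparing the maximizer's $f_\star$ with the feasible point $\hat f(\hat x_\star)$. Everything else is the verbatim computation from the nonsmooth case.
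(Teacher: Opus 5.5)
Your proposal is correct and matches the paper's argument: the paper proves this theorem by repeating the nonsmooth proof verbatim with $\ell_i$ replaced by $m_i$. That means dropping the $\mathcal{Q}_{i,\star}$ constraints by comparison with a consistent $\hat f$, exchanging min and max over the simplex and the ball, and expanding $D\|\nabla\alpha\| = \sigma D^2 + \frac{1}{4\sigma}\|\nabla\alpha\|^2$ into the minimizer-estimate identity. Your Step 1 also writes out the feasibility and constraint-dropping details that the paper only asserts for the smooth case.
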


So in the smooth convex setting, the same dynamic placement argument again yields a minimizer estimate and a dual certificate directly from the observed first-order information. The main distinction from the nonsmooth case is that smoothness allows an extra term in the minorant at the minimizer.

We summarize the results of this section: Consider an arbitrary procedure for producing iterates $x_0,x_1,\dots,x_N$. After the fact, once one has observed the first-order information from running the algorithm, one can deduce a minimizer estimate $z_{N+1}$ and a dual model $\alpha$ by solving an instance-dependent minimax problem, dual to~\eqref{eq:objective-gap-maximizing} or~\eqref{eq:smooth-objective-gap-maximizing}. These candidate minimizers and dual certificates directly establish stronger performance guarantees than~\eqref{eq:simple-primal-guarantee} at no cost, i.e., without harming worst-case guarantees.
In the remainder of this paper, we show that for nondegenerate FSFOMs, an \emph{instance-independent} construction of $\alpha$ and $z_{N+1}$ can be determined \emph{a priori} from the algorithm's PEP.

\section{Dual Constructions for Fixed-Step Subgradient Methods} \label{sec:subgrad-development}
This section develops general conditions under which subgradient-type methods with a primal objective gap guarantee have the same guarantee on a primal-dual gap, often being strictly stronger. The section then concludes with three example subgradient methods from the literature, now with strengthened primal-dual theory.

\subsection{Primal-Dual Guarantees for Lipschitz Convex Minimization}

Consider any fixed-step subgradient method, defined by weights $W$. Recall its worst-case final objective gap is given by $\mathtt{PEP}_{M}(W)$, defined in~\eqref{eq:subgradient-pep}. Further, suppose $d\geq N+2$ and that strong duality holds for its associated semidefinite programming problem. Establishing the strict feasibility of these PEPs suffices to establish strong duality and dual attainment. The lower triangular entries of $W$ being nonzero is sufficient to this end~\cite[Lemma 1.1]{zoll2026complete}. For example, Theorem 6 of~\cite{taylor2017InterpolationFMuLPEP} establishes the analogous result for smooth convex settings.

The dual problem then establishes bounds $\mathtt{PEP}_{M}(W)\leq r$ by providing a corresponding dual feasible solution: nonnegative multipliers $\lambda_{i,j}, \gamma_i, \sigma$ (for $\mathcal{C}_{i,j},\mathcal{M}_i,\mathcal{D}$, respectively) and positive semidefinite $Z\in\mathbb{S}^{N+2}$. Given strong duality and dual attainment, there must exist multipliers satisfying the following identity:
\begin{equation} \label{eq:subgrad-primal-gap-target}
    \mathtt{PEP}_{M}(W) - (f_N-f_\star) = \sum_{i,j\in\cI} \lambda_{i,j} \mathcal{C}_{i,j} + \sum_{i\in\cI} \gamma_i\mathcal{M}_i + \sigma\mathcal{D} + \langle Z, G\rangle,
\end{equation}
where $x_n = x_0 - \sum_{i=0}^{n-1}W_{n-1,i}g_i$ and $g_\star=0$, and the remaining data ($f_i$ and each inner product in $G$) are treated as free variables.
These multipliers establish that the right-hand side is nonnegative for any first-order information $(x_i,f_i,g_i)$ encountered. Hence, the left-hand side is nonnegative, i.e., they prove a tight convergence rate of $f_N-f_\star \leq \mathtt{PEP}_{M}(W)$.

The tightness of this convergence rate is easily observed. Let $x_i,f_i,g_i$ be first-order data attaining the optimal value of the primal PEP~\eqref{eq:subgradient-pep}. By the Interpolation Theorem~\ref{thm:LipschitzConvexInterpolation}, there exists an $M$-Lipschitz convex problem matching these first-order values. Then, running the subgradient method defined by $W$ on this particular interpolated instance will observe first-order values
\begin{equation}
    x_i^W = x_i, \qquad f_i^W = f_i, \qquad g_i^W = g_i.
\end{equation}
So the final objective gap realized by $W$ will be $f_N^W-f_\star^W = \mathtt{PEP}_{M}(W)$.

We next apply complementary slackness to simplify a given primal-dual PEP pair (a tight instance and a structured proof). If the realized primal problem data has $\mathcal{C}_{i,j}^W > 0$, then the corresponding dual multiplier $\lambda_{i,j}$ must be zero.
In particular, suppose that no iterate is prematurely optimal \emph{on all worst-case functions}. Equivalently, suppose that for each $i=0,\dots,N$, there exists some worst-case function $f$ such that $f_i^W > f_\star^W$. Then, since $g_\star=0$, we have that $\mathcal{C}_{i,\star} > 0$ and so $\lambda_{i,\star}=0$. Also note that since $\mathcal{C}_{i,i}=0$ identically, one can set $\lambda_{i,i}=0$ without loss.
So viewing $\lambda$ as a block matrix
\begin{equation}
    \lambda = \begin{bmatrix}
        \lambda_{\star,\star} &\lambda_{\star,\cdot}\\
        \lambda_{\cdot,\star}& \hat{\lambda}
    \end{bmatrix},
\end{equation}
the first column of $\lambda$ can be taken to be zero. Hence, it suffices to consider only the remainder of the first row $\lambda_{\star,\cdot}$ and the main block $\hat\lambda$. We abbreviate this structure with a definition\footnote{Although all classical subgradient methods studied, to our knowledge, are nondegenerate, simple degenerate methods exist. For instance when $M=D=1$ and $N=2$, $W=\begin{bmatrix}
		1 & 0 \\ 2 & 3
\end{bmatrix}$ is degenerate as solving the associated PEP SDP, one has $\lambda_{1,\star}=1/4>0$.}.

\begin{definition}
    A subgradient method defined by fixed-step weights $W$ is \emph{nondegenerate} if the semidefinite programming formulation of its PEP has (i) strong duality and primal/dual attainment for the associated PEP and (ii) for each $i = 0,\dots,N$, there exists an optimal PEP solution with $f_i^W > f_\star^W$.
\end{definition}

Similar to the above complementary slackness deductions, any optimal proof must have $\gamma_\star=0$ since $\mathcal{M}_\star = M^2 - \|0\|^2 > 0$.
The following lemma uses these observations to provide a sufficient form of optimal proof certificates, independent of quantities related to the minimizer $x_\star$.

\begin{lemma} \label{lem:subgrad-structured-optimal-proof-template}
    Consider any $N\geq 1$, $M,D>0$, and any nondegenerate subgradient method defined by fixed-step weights $W$.
    Then there exists a nonnegative vector $a\in\R^{N+1}$, $\sigma\geq 0$, and a positive semidefinite matrix $\hat{Z}\in\mathbb{S}^{N+1}$ such that $\sum_{i=0}^N a_i = 1$, the following matrix is positive semidefinite
    \begin{equation}\label{eq:subgrad-structured-optimal-proof-template-psd}
        \begin{bmatrix}
            \sigma & -\frac{1}{2}a^\top\\[0.3em]
            -\frac{1}{2}a & \hat Z
        \end{bmatrix},
    \end{equation}
    and
    \begin{equation}\label{eq:subgrad-structured-optimal-proof-template}
        \mathtt{PEP}_{M}(W) - \left(f_N - \sum_{i=0}^N a_i (f_i + \langle g_i, x_0-x_i\rangle )\right)
        \geq
        \sigma D^2
        +
        \langle \hat Z,\hat G\rangle,
    \end{equation}
    where $\hat G$ denotes the principal submatrix of $G$ corresponding to subgradient-subgradient inner products. 
\end{lemma}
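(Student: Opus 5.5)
The plan is to start from an optimal dual certificate for $\mathtt{PEP}_{M}(W)$ and read off $a$, $\sigma$, $\hat Z$ by matching coefficients. By nondegeneracy (i), strong duality and dual attainment give multipliers $\lambda_{i,j}\ge 0$, $\gamma_i\ge 0$, $\sigma\ge 0$ and $Z\succeq 0$ satisfying the identity \eqref{eq:subgrad-primal-gap-target}. Here the $f_i$ and the entries of $G$ are treated as free variables. Complementary slackness, as discussed before the lemma, lets us take three simplifications. First, $\lambda_{i,\star}=0$ for all $i=0,\dots,N$, using nondegeneracy (ii) together with $g_\star=0$. Second, $\lambda_{i,i}=0$. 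Third, $\gamma_\star=0$.

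The first step matches the coefficient of $f_\star$. On the left of \eqref{eq:subgrad-primal-gap-target} it equals $1$. On the right, $f_\star$ appears only in $\mathcal{C}_{\star,i}$ with coefficient $+\lambda_{\star,i}$, since $\lambda_{i,\star}=0$. Hence $\sum_{i=0}^N\lambda_{\star,i}=1$, and we set $a_i:=\lambda_{\star,i}\ge 0$.

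Next, write $p=x_0-x_\star$, so that $x_\star - x_i = (x_0-x_i) - p$. This gives
\[
\sum_i a_i\mathcal{C}_{\star,i} = f_\star - \sum_i a_i\bigl(f_i+\langle g_i,x_0-x_i\rangle\bigr) + \sum_i a_i\langle g_i,p\rangle .
\]
Since each $x_0-x_i$ is a fixed combination of the $g_j$, the inner products $\langle g_i,x_0-x_i\rangle$ are linear in $\hat G$. Moving the $f_\star$ and $a$-terms to the left turns the identity into
\[
\mathtt{PEP}_{M}(W)-\Bigl(f_N-\sum_i a_i(f_i+\langle g_i,x_0-x_i\rangle)\Bigr) = \sum_i a_i\langle g_i,p\rangle + \sum_{i,j\ge 0}\lambda_{i,j}\mathcal{C}_{i,j} + \sum_{i\ge 0}\gamma_i\mathcal{M}_i + \sigma(D^2-\|p\|^2) + \langle Z,G\rangle .
\]

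The key step is to match coefficients of the entries of $G$ that involve $p$. The terms $\mathcal{C}_{i,j}$ and $\mathcal{M}_i$ with $i,j\ge 0$ involve only the $f_i$ and $\hat G$, because $x_i-x_j$ is a combination of subgradients. Partition
\[
Z=\begin{bmatrix} z & w^\top\\ w & \hat Z\end{bmatrix}
\]
conformally with $P=[p,g_0,\dots,g_N]$. The left side contains no $p$-dependence, so the coefficients on the right must vanish. For $\|p\|^2$ this gives $-\sigma+z=0$, i.e. $z=\sigma$. For $\langle p,g_i\rangle$ it gives $a_i+2w_i=0$, i.e. $w=-\tfrac12 a$. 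Therefore $Z$ is exactly the matrix \eqref{eq:subgrad-structured-optimal-proof-template-psd}, and $Z\succeq 0$ yields the claimed positive semidefiniteness. In particular, its principal submatrix $\hat Z$ is positive semidefinite.

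The identity then reduces to
\[
\mathtt{PEP}_{M}(W)-\Bigl(f_N-\sum_i a_i(f_i+\langle g_i,x_0-x_i\rangle)\Bigr) = \sigma D^2 + \langle\hat Z,\hat G\rangle + \sum_{i,j\ge 0}\lambda_{i,j}\mathcal{C}_{i,j} + \sum_{i\ge0}\gamma_i\mathcal{M}_i .
\]
For any first-order data generated by the method on an $M$-Lipschitz convex function, the last two sums are nonnegative. Dropping them gives \eqref{eq:subgrad-structured-optimal-proof-template}.

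The main obstacle is the bookkeeping in the coefficient-matching step. One has to be careful about sign conventions for $p=x_0-x_\star$ versus $x_\star-x_0$, and about the factor of $2$ on off-diagonal entries of $\langle Z,G\rangle$. One must also confirm that no $\mathcal{C}_{i,j}$ with $i,j\ge0$ secretly contributes a $p$-term, which holds since those differences cancel $x_0$. Finally, the inequality \eqref{eq:subgrad-structured-optimal-proof-template} should be interpreted as holding for all realizable first-order data, as opposed to a formal polynomial identity, since that is where the dropped nonnegative terms are used.
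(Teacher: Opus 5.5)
Your proposal is correct and follows essentially the same route as the paper's proof. You take an optimal dual certificate and use nondegeneracy with complementary slackness to set $\lambda_{i,\star}=0$ and $\gamma_\star=0$. You then match the coefficients of $f_\star$, $\langle g_i,x_0-x_\star\rangle$ and $\|x_0-x_\star\|^2$ to obtain $\sum_i a_i=1$, $b=-a/2$ and $c=\sigma$, so that $Z\succeq 0$ is exactly the claimed matrix, and finally drop the nonnegative $\mathcal{C}_{i,j}$ and $\mathcal{M}_i$ sums.
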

\begin{proof}
    Let $\lambda,\gamma,\sigma,Z$ be optimal dual PEP certificates satisfying~\eqref{eq:subgrad-primal-gap-target}. By the complementary slackness discussion above,
    we may assume $\lambda_{i,\star}=0$, $\lambda_{\star,\star} = 0$ and $\gamma_\star=0$.
    Decompose $\lambda$ and $Z$ into blocks as:
    \begin{equation*}
        \lambda = \begin{bmatrix}
        0 & a^\top\\
        0 & \hat\lambda
        \end{bmatrix}\qquad\text{and}\qquad
        Z = \begin{bmatrix}
            c & b^\top\\
            b & \hat Z
        \end{bmatrix}.
    \end{equation*}
    Then, the inner product $\langle Z,G\rangle$ can be expanded in this block form to be
    \begin{equation*}
        \langle Z,G\rangle = c\|x_0-x_\star\|^2 + 2\sum_{i=0}^N b_i\langle g_i,x_0-x_\star\rangle + \langle \hat Z,\hat G\rangle.
    \end{equation*}
    Likewise, for each $i=0,\dots,N$, we can expand
    \begin{equation*}
        \mathcal{C}_{\star,i}
        =
        f_\star - f_i - \langle g_i, x_\star-x_i\rangle
        =
        f_\star - f_i - \langle g_i, x_0-x_i\rangle + \langle g_i,x_0-x_\star\rangle.
    \end{equation*}

    Substituting these expressions into~\eqref{eq:subgrad-primal-gap-target} and recalling we have removed the first column of $\lambda$ gives
    \begin{align*}
        \mathtt{PEP}_{M}(W) - (f_N-f_\star)
        &=
        \sum_{i=0}^N a_i (f_\star - f_i - \langle g_i, x_0-x_i\rangle + \langle g_i,x_0-x_\star\rangle ) \\
        &\qquad
        + \sum_{i,j=0}^N \hat\lambda_{i,j}\mathcal{C}_{i,j}
        + \sum_{i=0}^N \gamma_i\mathcal{M}_i
        + \sigma(D^2-\|x_0-x_\star\|^2) \\
        &\qquad
        + c\|x_0-x_\star\|^2
        + 2\sum_{i=0}^N b_i\langle g_i,x_0-x_\star\rangle
        + \langle \hat Z,\hat G\rangle.
    \end{align*}
    We now compare coefficients of the free variables appearing on both sides.
    First, the coefficient of $f_\star$ on the left-hand side is $1$, while on the right-hand side it is $\sum_{i=0}^N a_i$. Hence,
    \begin{equation*}
        \sum_{i=0}^N a_i = 1.
    \end{equation*}
    Second, since the left-hand side is independent of $\langle g_i,x_0-x_\star\rangle$, the coefficient of each such term on the right-hand side must vanish, ensuring that
    \begin{equation*}
        a_i + 2b_i = 0 \qquad \forall i=0,\dots,N.
    \end{equation*}
    So we conclude that $b = -a/2$.
    Third, noting that the left-hand side is independent of $\|x_0-x_\star\|^2$, its coefficient on the right-hand side must vanish. Therefore,
    \begin{equation*}
        -\sigma + c = 0.
    \end{equation*}
    So we conclude that $c=\sigma$.
    
    Substituting these three identities back into the equality above, and canceling the $f_\star$ terms using $\sum_{i=0}^N a_i=1$, yields
    \begin{equation*}
        \mathtt{PEP}_{M}(W) - \left(f_N - \sum_{i=0}^N a_i (f_i + \langle g_i, x_0-x_i\rangle )\right)
        =
        \sum_{i,j=0}^N \hat\lambda_{i,j}\mathcal{C}_{i,j}
        +
        \sum_{i=0}^N \gamma_i\mathcal{M}_i
        +
        \sigma D^2
        +
        \langle \hat Z,\hat G\rangle.
    \end{equation*}
    Dropping the first two nonnegative summations yields~\eqref{eq:subgrad-structured-optimal-proof-template}. Finally, since $Z\succeq 0$, we have positive semidefiniteness of $\begin{bmatrix}
            \sigma & -\frac{1}{2}a^\top\\[0.3em]
            -\frac{1}{2}a & \hat Z
        \end{bmatrix}$.
\end{proof}

\begin{theorem} \label{thm:subgrad-primal-dual-certificate}
    Consider any $N\geq 1$, $M,D>0$, and any nondegenerate subgradient method defined by fixed-step weights $W$.
    Then there exists a nonnegative vector $a\in\R^{N+1}$ summing to one
     and a constant $\sigma > 0$,
    such that the method applied to any $M$-Lipschitz convex problem with $\|x_0-x_\star\|\leq D$ has
    \begin{equation}
        \underbrace{f_N - f_\star}_{\text{Primal Objective Gap}} \leq \underbrace{f_N - \alpha(x_\star)}_{\text{Primal-Dual Gap}} + \underbrace{\sigma\|z_{N+1} - x_\star\|^2}_{\text{Minimizer Estimate}}
        \leq
        \mathtt{PEP}_{M}(W)
    \end{equation}
    and
    \begin{align}
    \underbrace{f_N - f_\star}_{\text{Primal Objective Gap}} &\leq \underbrace{f_N - \min_{\|x-x_0\|\leq D} \alpha(x)}_{\text{Computable Primal-Dual Gap}}\leq \mathtt{PEP}_{M}(W)
    \end{align}
    where $\alpha(x) = \sum_{i=0}^N a_i (f(x_i) + \langle g_i, x-x_i\rangle)\in A_f\subset A_f^\star$, 
    and $z_{N+1} = x_0 - \frac{1}{2\sigma}\nabla \alpha$.
\end{theorem}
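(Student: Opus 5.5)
The plan is to start from the structured certificate of Lemma~\ref{lem:subgrad-structured-optimal-proof-template}. It provides $a\ge 0$ with $\sum_i a_i=1$, a scalar $\sigma\ge 0$, and $\hat Z\succeq 0$ satisfying \eqref{eq:subgrad-structured-optimal-proof-template-psd} and \eqref{eq:subgrad-structured-optimal-proof-template}. I will define $\alpha(x)=\sum_i a_i(f_i+\langle g_i,x-x_i\rangle)$. This is a convex combination of subgradient minorants, so $\alpha\in A_f$ and hence $\alpha(x_\star)\le f_\star$. Its gradient is $\nabla\alpha=\sum_i a_i g_i$. The left-hand side of \eqref{eq:subgrad-structured-optimal-proof-template} is then exactly $\mathtt{PEP}_M(W)-f_N+\alpha(x_0)$.

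\textbf{Step 1: $\sigma>0$.} If $\sigma=0$, positive semidefiniteness of the block matrix \eqref{eq:subgrad-structured-optimal-proof-template-psd} forces its off-diagonal block $-\tfrac12 a$ to vanish. That contradicts $\sum_i a_i=1$. So $\sigma>0$, and $z_{N+1}=x_0-\frac{1}{2\sigma}\nabla\alpha$ is well defined.

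\textbf{Step 2: lower-bound $\langle\hat Z,\hat G\rangle$.} This is the key step. For any vector $u\in\R^d$, I will evaluate the positive semidefinite matrix \eqref{eq:subgrad-structured-optimal-proof-template-psd} against the Gram matrix of $[u,g_0,\dots,g_N]$. Since the inner product of two PSD matrices is nonnegative, this gives
\[
\sigma\|u\|^2-\langle u,\nabla\alpha\rangle+\langle \hat Z,\hat G\rangle\ge 0 .
\]
Choosing the maximizing $u=\nabla\alpha/(2\sigma)$ yields $\langle\hat Z,\hat G\rangle\ge \|\nabla\alpha\|^2/(4\sigma)$. Combined with \eqref{eq:subgrad-structured-optimal-proof-template}, this gives
\[
\mathtt{PEP}_M(W)-f_N+\alpha(x_0)\ \ge\ \sigma D^2+\frac{\|\nabla\alpha\|^2}{4\sigma}.
\]

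\textbf{Step 3: conclude both chains.} For the first chain, I will write $\alpha(x_0)=\alpha(x_\star)+\langle\nabla\alpha,x_0-x_\star\rangle$. Completing the square gives
\[
\sigma\|z_{N+1}-x_\star\|^2=\sigma\|x_0-x_\star\|^2-\langle\nabla\alpha,x_0-x_\star\rangle+\frac{\|\nabla\alpha\|^2}{4\sigma}.
\]
Substituting into Step 2, the bound becomes
\[
\mathtt{PEP}_M(W)\ \ge\ f_N-\alpha(x_\star)+\sigma\|z_{N+1}-x_\star\|^2+\sigma\bigl(D^2-\|x_0-x_\star\|^2\bigr).
\]
The last term is nonnegative because $\|x_0-x_\star\|\le D$, so it can be dropped. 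Then $\alpha(x_\star)\le f_\star$ gives the first displayed chain.

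For the second chain, AM--GM gives $\sigma D^2+\|\nabla\alpha\|^2/(4\sigma)\ge D\|\nabla\alpha\|$. From Step 2 this yields $\mathtt{PEP}_M(W)\ge f_N-\alpha(x_0)+D\|\nabla\alpha\|$. The right-hand side equals $f_N-\min_{\|x-x_0\|\le D}\alpha(x)$, since the minimum of an affine function over the ball is attained at $x_0-D\nabla\alpha/\|\nabla\alpha\|$, and the formula is trivially true when $\nabla\alpha=0$. The lower bound $f_N-f_\star$ then follows from \eqref{eq:dual-bounding-sequence}.

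The main obstacle is Step 2: turning the PSD condition \eqref{eq:subgrad-structured-optimal-proof-template-psd} into the scalar bound $\langle\hat Z,\hat G\rangle\ge\|\nabla\alpha\|^2/(4\sigma)$. It requires choosing the free vector $u$ well. The remaining steps are routine algebra once that bound is in hand.
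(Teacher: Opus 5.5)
Your proposal is correct and follows essentially the same route as the paper. You take the certificate from Lemma~\ref{lem:subgrad-structured-optimal-proof-template}, use $a\neq 0$ to force $\sigma>0$, bound $\langle \hat Z,\hat G\rangle\ge \|\nabla\alpha\|^2/(4\sigma)$, and then complete the square for the first chain and apply AM--GM for the second. The only cosmetic difference is how you get the bound on $\langle\hat Z,\hat G\rangle$: you pair the PSD block matrix with the Gram matrix of $[u,g_0,\dots,g_N]$ and optimize over $u$, whereas the paper uses the Schur complement $\hat Z\succeq \frac{1}{4\sigma}aa^\top$ directly. The two arguments are equivalent.
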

\begin{proof}
    Let $a,\sigma,\hat Z$ be as provided by Lemma~\ref{lem:subgrad-structured-optimal-proof-template}, satisfying~\eqref{eq:subgrad-structured-optimal-proof-template-psd}
    and~\eqref{eq:subgrad-structured-optimal-proof-template}.
    Since each function $x \mapsto f_i + \langle g_i, x-x_i\rangle$ is an affine minorant of $f$, and since $a_i\geq 0$ with $\sum_{i=0}^N a_i=1$, the function $\alpha$ is also an affine minorant of $f$. In particular, $\alpha\in A_f\subset A_f^\star$. Further, note the values
    \begin{equation*}
        \alpha(x_0) = \sum_{i=0}^N a_i  (f_i + \langle g_i, x_0-x_i\rangle ),
        \qquad
        \nabla \alpha = \sum_{i=0}^N a_i g_i.
    \end{equation*}
    By Lemma~\ref{lem:subgrad-structured-optimal-proof-template}, we have that
    \begin{equation*}
        \mathtt{PEP}_{M}(W) -  (f_N-\alpha(x_0) )
        \geq
        \sigma D^2 + \langle \hat Z,\hat G\rangle.
    \end{equation*}

    Next, since $\sum_{i=0}^N a_i=1$, the vector $a$ is nonzero. Therefore, since $\begin{bmatrix}
            \sigma & -\frac{1}{2}a^\top\\[0.3em]
            -\frac{1}{2}a & \hat Z
        \end{bmatrix}$ is positive semidefinite, $\sigma$ must be strictly positive. By the Schur complement, we then have $\hat Z \succeq \frac{1}{4\sigma}aa^\top.$
    Consequently,
    \begin{equation*}
        \langle \hat Z,\hat G\rangle
        \geq
        \frac{1}{4\sigma}a^\top \hat G a
        =
        \frac{1}{4\sigma}\left\|\sum_{i=0}^N a_i g_i\right\|^2
        =
        \frac{1}{4\sigma}\|\nabla \alpha\|^2.
    \end{equation*}

    From the last two displays, we conclude that
    \begin{equation} \label{eq:combined-display}
        \mathtt{PEP}_{M}(W) - (f_N-\alpha(x_0))
        \geq
        \sigma D^2 + \frac{1}{4\sigma}\|\nabla \alpha\|^2.
    \end{equation}
    We will bound \eqref{eq:combined-display} in two different ways.

    First, we can combine \eqref{eq:combined-display} with the following two inequalities:
    \begin{align*}
        f_\star &\geq \alpha(x_\star) = \alpha(x_0) + \langle{x_\star - x_0, \nabla \alpha} \rangle\\
\|{z_{N+1}-x_\star}\|^2 &\leq D^2 + \frac{1}{4\sigma^2}\|\nabla\alpha\|^2 + \frac{1}{\sigma}\langle{\nabla\alpha, x_\star - x_0}\rangle
    \end{align*}
    to deduce that
    \begin{align*}
        f_N - f_\star &\leq f_N - \alpha(x_\star) + \sigma\|z_{N+1}-x_\star\|^2\\
        &\leq f_N - \alpha(x_0) + \sigma D^2 + \frac{1}{4\sigma}\|\nabla\alpha\|^2\\
        &\leq \mathtt{PEP}_{M}(W).
    \end{align*}
    
    Alternatively, by the AM-GM inequality,
    \begin{align*}
        \mathtt{PEP}_{M}(W) - (f_N-\alpha(x_0))
        \geq
        \sigma D^2 + \frac{1}{4\sigma}\|\nabla \alpha\|^2 \geq D\|\nabla\alpha\|.
    \end{align*}
    Rearranging this inequality gives the second claim.
\end{proof}
The first bound in Theorem~\ref{thm:subgrad-primal-dual-certificate} shows that the primal-dual gap $f(x_N)-\alpha(x_\star)$ converges not only as fast as the worst-case primal gap, but outperforms it by $\sigma\|z_{N+1} - x_\star\|^2$. Hence, in the tight worst-case where the primal gap (and hence primal-dual gap) equals $\mathtt{PEP}_{M}(W)$, one must have that the minimizer exactly occurs at $z_{N+1} = x_0 - \frac{1}{2\sigma}\nabla \alpha$. So the dual solution $\alpha$ not only provides a dual lower bound but also indicates where the minimizer must be in tight, worst-case instances. As the true minimizer $x_\star$ differs from the worst-case placement $z_{N+1}$, the method's guarantee improves at a quadratic rate.


Finally, we note two subtleties in Theorem~\ref{thm:subgrad-primal-dual-certificate}. First, while the tightness of the PEP reformulation required $d\geq N+2$, the above upper bounding guarantee holds for problems of any dimension. Second, the necessary dual certificate combines subgradient lower bounds from all of the iterates $x_0,\dots,x_N$. As a result, numerically constructing this certificate requires one to compute a subgradient at the terminal iterate $x_N$ (if $a_N>0$), which may otherwise not be needed.

\subsection{Example Subgradient Methods and their Implicit Dual Certificates}

Below, we consider three subgradient methods from the literature, each known to admit a PEP certificate proving an exactly optimal worst-case rate of $MD/\sqrt{N+1}$. Since each of these methods is exactly optimal, its worst-case is attained by the maximin hard instance presented in~\cite[Appendix A]{drori2016Kelley}. Their known proof and this hard instance establish strong duality and attainment. In particular, on this instance, no method can reach the minimum objective value in $i < N+1$ iterations. Hence, $f_i^W>f^W_\star$. So each optimal method below is nondegenerate.

For each considered method, the affine function certifying this optimal objective gap rate is immediately available by examining the existing convergence proofs of these methods in the literature (in particular, the value of $\lambda_{\star,i}$ which sets $a_i$). For each method, the considered proof has $\lambda_{\star,i}$ constant, leading the dual model in each case to be the uniform averaging of the subgradient affine models seen. Indeed, this is necessary for all minimax optimal subgradient methods and optimal proofs, as shown by the complete characterization of~\cite{zoll2026complete}.

{\bf The Averaged Subgradient Method.} The averaged subgradient method iterates
\begin{align*}
    x_{n} &= x_{n-1} - \frac{D}{M\sqrt{N+1}} g_{n-1} \qquad \forall n=1,\dots,N-1\\
    x_{N} &= \frac{1}{N+1}\left[\sum_{i=0}^{N-1} x_i + \left(x_{N-1} - \frac{D}{M\sqrt{N+1}} g_{N-1}\right)\right].
\end{align*}
This amounts to the subgradient method with a fixed stepsize $\frac{D}{M\sqrt{N+1}}$ for $N$ iterations, where the terminal iterate is set as the average of these iterations. The presentation above combines the final step and averaging step to fit the standard fixed-step model~\eqref{eq:general-form-subgrad-methods}.

A known optimal proof for this method, establishing that it has $f(x_N)-f(x_\star)\leq MD/\sqrt{N+1}$, has constant coefficients $\lambda_{\star,i}$ equal to $1/(N+1)$ and $\sigma = M/(2D\sqrt{N+1})$. Hence $a_i=1/(N+1)$ and so the averaged subgradient method's standard primal objective gap convergence can be strengthened to the guarantees that
\begin{align} \label{eq:subgrad-strengthen-rate1}
    f(x_N) - \alpha(x_\star) + \sigma\left\|z_{N+1} - x_\star\right\|^2 &\leq \frac{MD}{\sqrt{N+1}}\\
    f(x_N) - \alpha(x_0) + D\|\nabla \alpha\| &\leq \frac{MD}{\sqrt{N+1}} \label{eq:subgrad-strengthen-rate2}
\end{align}
where the dual certificate is given by the affine minorant $\alpha(x) = \frac{1}{N+1} \sum_{i=0}^{N}\left(f(x_i) + \langle g_i, x-x_i\rangle \right)$ and the estimate of the minimizer is given by $z_{N+1} = x_0 - \frac{D}{M\sqrt{N+1}}\sum_{i=0}^N g_i$.

{\bf The Final Step Subgradient Method.} Recently, Zamani and Glineur~\cite{ZamaniGlineur2025} proposed a subgradient method with nonconstant stepsizes whose terminal iterate (without averaging) possesses the same optimal convergence guarantee as seen above. Formally, they consider the iteration for $n=1,\dots,N$
\begin{equation*}
    x_{n} = x_{n-1} - \frac{D(N+1-n)}{M(N+1)^{3/2}} g_{n-1}.
\end{equation*}
Their proof also uses uniform multipliers $\lambda_{\star,i} = 1/(N+1)$ and $\sigma = M/(2D\sqrt{N+1})$. So their method implicitly constructs the same affine minorant $\alpha(x) = \frac{1}{N+1} \sum_{i=0}^{N}\left(f(x_i) + \langle g_i, x-x_i\rangle \right)$ and constructs the same minimizer estimate $z_{N+1} = x_0 - \frac{D}{M\sqrt{N+1}}\sum_{i=0}^N g_i$. These strengthen Theorem 5.1 of~\cite{ZamaniGlineur2025} to ensure that their final iterate has the same stronger primal-dual gap bounds~\eqref{eq:subgrad-strengthen-rate1} and~\eqref{eq:subgrad-strengthen-rate2}.

{\bf The SSEP Method.} Another optimal fixed-step method of a momentum type was designed by Drori and Taylor~\cite{drori2019efficient}. They considered the following subgradient SSEP method for $n=1,\dots,N$
\begin{align*}
    y_n &= \frac{n}{n+1}x_{n-1} + \frac{1}{n+1}x_0,\\
    d_n &= \frac{1}{n+1}\sum_{i=0}^{n-1} g_i,\\
    x_n &= y_n - \frac{D}{M\sqrt{N+1}}\, d_n
\end{align*}
which can be recursively expanded to verify that it takes the fixed-step form~\eqref{eq:general-form-subgrad-methods}. Corollary 3 of~\cite{drori2019efficient} establishes that this has an optimal $MD/\sqrt{N+1}$ convergence rate. Again, their proof uses multipliers $\lambda_{\star,i}=1/(N+1)$ and $\sigma = M/(2D\sqrt{N+1})$. So the implicit dual constructions take the same form. Hence, their Corollary 3 can be strengthened to ensure~\eqref{eq:subgrad-strengthen-rate1} and~\eqref{eq:subgrad-strengthen-rate2}.

Our implicit dual theory provides an alternative understanding of the SSEP momentum method. Consider the affine minorant corresponding to the partial dual certificate built prior to iteration $n$
$$\alpha_n(x) = \frac{1}{n}\sum_{i=0}^{n-1} (f(x_i) + \langle g_i, x - x_i\rangle). $$
Noting that this dual solution has $d_n = \frac{n}{n+1}\nabla \alpha_n$, the SSEP method can be written in primal-dual terms, alternating between averaging and descent on the dual minorant. In these terms, the SSEP method is initialized with $\alpha_1(x) = f(x_0) + \langle g_0, x - x_0\rangle$ and iterates for $n=1,\dots,N$
\begin{align*}
    x_n &=  \frac{n}{n+1}\left(x_{n-1} - \frac{D}{M\sqrt{N+1}} \nabla \alpha_n\right)+ \frac{1}{n+1}x_0 \\
    \alpha_{n+1} &= \frac{n}{n+1}\alpha_{n} + \frac{1}{n+1} (f(x_{n}) + \langle g_{n}, \cdot - x_{n}\rangle).
\end{align*}

\section{Dual Constructions for Fixed-Step Gradient Methods} \label{sec:grad-development}
The same progression used for subgradient methods also applies to gradient methods. Consider any fixed-step gradient method~\eqref{eq:general-form-subgrad-methods} with weights $W$ for $L$-smooth convex minimization. Then its worst-case performance (invoking Theorem~\ref{thm:SmoothConvexInterpolation} as an interpolation result) is given by
\begin{align}
    \mathtt{PEP}_{L}(W) &:= \begin{cases}
        \max_{f,x_i} &f(x_N)-f(x_\star)\\
        \mathrm{s.t.} & x_n = x_0 - \sum_{i=0}^{n-1}W_{n-1,i}g_i \quad \forall n=1,\dots,N\\
        & g_i = \nabla f(x_i) \qquad \forall i\in\cI\\
        & f\text{ is convex, $L$-smooth, and minimized at $x_\star$}\\
        & \|x_0-x_\star\|\leq D
    \end{cases} \nonumber\\
    &= \begin{cases}
        \max_{(x_i,f_i,g_i)_{i\in\cI}} & f_N - f_\star\\
        \mathrm{s.t.} & x_n = x_0 - \sum_{i=0}^{n-1}W_{n-1,i}g_i \quad \forall n=1,\dots,N\\
        & g_\star = 0\\
        & \mathcal{Q}_{i,j} \geq 0 \qquad \forall i,j\in\cI\\
        & \mathcal{D} \geq 0.
    \end{cases} \label{eq:gradient-pep}
\end{align}
As before, this problem has a well-known reformulation as a semidefinite program. This follows by substituting each $x_n$ and $g_\star$ for their equality definitions. This makes the above problem linear in the function values $f=(f_\star,f_0,\dots,f_N)$ and linear in the quadratic form $G = P^\top P$, with $P = \begin{bmatrix} x_0-x_\star, g_0,\dots, g_N \end{bmatrix}$ which must be positive semidefinite. Provided $d\geq N+2$, the resulting semidefinite program is equivalent.

The dual semidefinite program, with nonnegative multipliers $\lambda_{i,j}$ for the $\mathcal{Q}_{i,j}\geq 0$ constraints, takes the same general form. A bound $f_N-f_\star \leq r$ is proven by nonnegative multipliers $\lambda_{i,j},\sigma$ and a positive semidefinite $Z$ such that the following identity holds
\begin{equation*}
    r - (f_N-f_\star) = \sum_{i,j\in\cI} \lambda_{i,j}\mathcal{Q}_{i,j} + \sigma \mathcal{D} + \langle Z, G\rangle,
\end{equation*}
fixing $x_n = x_0 - \sum_{i=0}^{n-1}W_{n-1,i}g_i$ and $g_\star=0$, and treating $f_i$ and each inner product from $G$ as free variables. The dual optimization problem finds the best proof of this form (i.e., minimizing $r$). Given that strong duality holds, one can find multipliers such that
\begin{equation} \label{eq:smooth-general-proof-template}
    \mathtt{PEP}_{L}(W) - (f_N-f_\star) = \sum_{i,j\in\cI} \lambda_{i,j}\mathcal{Q}_{i,j} + \sigma \mathcal{D} + \langle Z, G\rangle.
\end{equation}

\subsection{Primal-Dual Guarantees for Smooth Convex Minimization}

We restrict our attention to {\it nondegenerate} gradient methods\footnote{Again, simple degenerate methods exist. For instance when $L=D=1$ and $N=2$, $W=\begin{bmatrix}
		1 & 0 \\ 2 & 3
\end{bmatrix}$ is degenerate as solving the associated PEP SDP, one has $\lambda_{1,\star}=7/2>0$.}, defined as those whose semidefinite programming PEP formulation has (i) strong duality and primal/dual attainment for the associated PEP and (ii) for each $i=0,\dots,N$, a tight instance exists with realized values $f_i^W - \frac{1}{2L}\|g^W_i\|^2 > f_\star^W$.

Note that if $f_i^W - \frac{1}{2L}\|g^W_i\|^2 = f_\star^W$, then the classic descent lemma (i.e., $\mathcal{Q}_{i,\star}\geq 0$, noting $g_\star=0$) ensures that the algorithm is one gradient descent step (with stepsize $1/L$) away from reaching the exact minimum value.

We can apply the same reasoning used to prove Lemma~\ref{lem:subgrad-structured-optimal-proof-template} to conclude the following lemma. As before, the constructed weights $a_i$ below are exactly the optimal proof's $\lambda_{\star,i}$ multipliers. In the case of smooth optimization, we further deduce the exact value of $\sigma$ in terms of $D$ and $\mathtt{PEP}_{L}(W)$.

\begin{lemma} \label{lem:grad-structured-optimal-proof-template}
    Consider any $N\geq 1$, $L,D>0$, and any nondegenerate gradient method defined by fixed-step weights $W$.
    Then there exists a nonnegative vector $a\in\R^{N+1}$, nonnegative multipliers $\hat\lambda_{i,j}$ for $i,j = 0,\dots, N$, and a positive semidefinite matrix $\hat{Z}\in\mathbb{S}^{N+1}$ such that $\sum_{i=0}^N a_i = 1$, the following matrix is positive semidefinite
    \begin{equation}\label{eq:grad-structured-optimal-proof-template-psd}
        \begin{bmatrix}
            \sigma & -\frac{1}{2}a^\top\\[0.3em]
            -\frac{1}{2}a & \hat Z
        \end{bmatrix},
    \end{equation}
    and
    \begin{equation}\label{eq:grad-structured-optimal-proof-template}
        \mathtt{PEP}_{L}(W) - \left(f_N - \sum_{i=0}^N a_i \left(f_i + \langle g_i, x_0-x_i\rangle + \frac{1}{2L}\|g_i\|^2\right)\right)
        =
        \sum_{i,j=0}^N \hat\lambda_{i,j}\mathcal{Q}_{i,j}
        +
        \sigma D^2
        +
        \langle \hat Z,\hat G\rangle,
    \end{equation}
    where $\sigma=\frac{\mathtt{PEP}_{L}(W)}{D^2}>0$ and $\hat G$ denotes the principal submatrix of $G$ corresponding to gradient-gradient inner products.
\end{lemma}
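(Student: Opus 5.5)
The plan is to mirror the proof of Lemma~\ref{lem:subgrad-structured-optimal-proof-template}, starting from an optimal dual certificate $(\lambda,\sigma,Z)$ satisfying~\eqref{eq:smooth-general-proof-template}. Such a certificate exists by part (i) of nondegeneracy.

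\textbf{Step 1: reduce the multipliers by complementary slackness.} By part (ii) of nondegeneracy, for each $i$ there is a tight instance with $\mathcal{Q}_{i,\star} = f_i^W - f_\star^W - \frac{1}{2L}\|g_i^W\|^2 > 0$. Hence we may take $\lambda_{i,\star}=0$. Since $\mathcal{Q}_{\star,\star}\equiv 0$, we may also set $\lambda_{\star,\star}=0$.

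Write $a_i := \lambda_{\star,i}$ and $\hat\lambda$ for the block indexed by $i,j=0,\dots,N$. Decompose
\[
Z=\begin{bmatrix} c & b^\top\\ b & \hat Z\end{bmatrix},
\qquad\text{so that}\qquad
\langle Z,G\rangle = c\|x_0-x_\star\|^2 + 2\sum_i b_i\langle g_i,x_0-x_\star\rangle + \langle\hat Z,\hat G\rangle .
\]
Expand each remaining minimizer term as
\[
\mathcal{Q}_{\star,i} = f_\star - \Bigl(f_i + \langle g_i,x_0-x_i\rangle + \tfrac{1}{2L}\|g_i\|^2\Bigr) + \langle g_i, x_0-x_\star\rangle .
\]
For $i,j\ge 0$, each difference $x_i-x_j$ is a fixed linear combination of the $g_k$ under the FSFOM rule. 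So each $\mathcal{Q}_{i,j}$ with $i,j\ge0$ involves only the $f_k$ and entries of $\hat G$; it has no $x_0-x_\star$ terms and no constant term.

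\textbf{Step 2: match coefficients in the free variables.} Treat $f_\star$, the $f_i$, and the entries of $G$ as free variables and compare both sides.
\begin{itemize}
\item The coefficient of $f_\star$ gives $\sum_i a_i = 1$.
\item The coefficient of each $\langle g_i,x_0-x_\star\rangle$ gives $a_i + 2b_i = 0$, so $b=-a/2$.
\item The coefficient of $\|x_0-x_\star\|^2$ gives $c=\sigma$.
\end{itemize}
Substituting these back and cancelling $f_\star$ yields exactly the identity~\eqref{eq:grad-structured-optimal-proof-template} as an equality. No terms are dropped here, which is why the statement carries the $\hat\lambda$ sum. Positive semidefiniteness of~\eqref{eq:grad-structured-optimal-proof-template-psd} is just $Z\succeq0$ with these blocks.

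\textbf{Step 3: pin down $\sigma$.} This is the genuinely new step relative to the subgradient case. The identity is affine in the free variables, so its constant terms must agree. On the left the constant is $\mathtt{PEP}_L(W)$. On the right, the $\mathcal{Q}_{i,j}$ terms and $\langle\hat Z,\hat G\rangle$ contribute no constants, and the only constant is $\sigma D^2$. In the subgradient setting the constants $M^2$ from $\mathcal{M}_i$ blocked this argument, but no such terms appear here. Hence $\sigma = \mathtt{PEP}_L(W)/D^2$.

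For strict positivity, I would use part (ii) at $i=N$: on some tight instance, $f_N^W - f_\star^W \ge f_N^W - \frac{1}{2L}\|g_N^W\|^2 - f_\star^W > 0$. Since the instance is tight, this gives $\mathtt{PEP}_L(W)>0$. Alternatively, since $a\neq 0$, the PSD block already forces $\sigma>0$.

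\textbf{Main obstacle.} I expect the main care point to be Step 1's bookkeeping. One must verify that the FSFOM substitution really leaves $\mathcal{Q}_{i,j}$ for $i,j\ge0$ free of $x_0-x_\star$ and of constant terms, and that the complementary slackness deductions apply to the chosen optimal dual solution. Once that is in place, the coefficient matching is routine.
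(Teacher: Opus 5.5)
Your proposal is correct and follows essentially the same route as the paper. You use nondegeneracy and complementary slackness to zero out $\lambda_{i,\star}$, match the coefficients of $f_\star$, $\langle g_i,x_0-x_\star\rangle$ and $\|x_0-x_\star\|^2$ as in the subgradient lemma while carrying the extra $\frac{1}{2L}\|g_i\|^2$ term, and match constant terms (possible because no $\mathcal{M}_i$ terms appear) to get $\sigma=\mathtt{PEP}_L(W)/D^2$; your explicit argument that $\sigma>0$, via $a\neq 0$ in the PSD block or nondegeneracy at $i=N$, fills in a detail the paper leaves implicit.
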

\begin{proof}
    The proof of this result follows almost identically to the previous lemma, omitting terms related to Lipschitzness $\gamma_i\mathcal{M}_i$ and replacing each $\mathcal{C}_{i,j}$ by $\mathcal{Q}_{i,j}$. 
    Since the $\mathcal{M}_i$ terms are omitted in~\eqref{eq:smooth-general-proof-template}, one can compare constant terms on the left- and right-hand sides. The only scalar on the right-hand side is $\sigma D^2$. From this, we deduce that any dual feasible solution must have $\sigma=\frac{\mathtt{PEP}_{L}(W)}{D^2}$.
    Since we replaced $\mathcal{C}_{i,j}$ by $\mathcal{Q}_{i,j}$, observe that whereas
    \begin{equation*}
        \mathcal{C}_{\star,i}
        =
        f_\star - f_i - \langle g_i, x_\star-x_i\rangle
        =
        f_\star - f_i - \langle g_i, x_0-x_i\rangle + \langle g_i,x_0-x_\star\rangle,
    \end{equation*}
    we now have an additional $\frac{1}{2L}\|g_i\|^2$ term as
    \begin{equation*}
        \mathcal{Q}_{\star,i}
        =
        f_\star - f_i - \langle g_i, x_\star-x_i\rangle - \frac{1}{2L}\|g_i\|^2
        =
        f_\star - f_i - \langle g_i, x_0-x_i\rangle + \langle g_i,x_0-x_\star\rangle - \frac{1}{2L}\|g_i\|^2.
    \end{equation*}
    Carrying this term forward, the remainder of the proof goes through without modification.
\end{proof}
This lemma directly supports a similar main theorem for smooth convex optimization. The proof of this result follows the same sequence of algebraic steps. The individual affine terms in the previous lemma
$$ \left(f_i + \langle g_i, x-x_i\rangle + \frac{1}{2L}\|g_i\|^2\right) $$
are no longer guaranteed to be minorants due to the added gradient norm squared term. So they may not lie in $A_f$. However, cocoercivity~\eqref{eq:cocoercive} (i.e., $\mathcal{Q}_{\star,i}\geq 0$) ensures that they are ``minorants at the minimizer'', lying in $A_f^\star$. As discussed in Section~\ref{sec:primal-dual}, this difference has no effect on the validity of the resulting primal-dual gap. Noting that $A_f^\star$ is also closed under convex combinations, the same proof as the previous theorem gives the following extension.

\begin{theorem} \label{thm:grad-primal-dual-certificate}
    Consider any $N\geq 1$, $L,D>0$, and any nondegenerate gradient method defined by fixed-step weights $W$.
    Then there exists a nonnegative vector $a\in\R^{N+1}$, summing to one, such that the method applied to any $L$-smooth convex problem with $\|x_0-x_\star\|\leq D$ has
    \begin{equation}
        \underbrace{f_N - f_\star}_{\text{Primal Objective Gap}} \leq \underbrace{f_N - \alpha(x_\star)}_{\text{Primal-Dual Gap}} + \underbrace{\frac{\mathtt{PEP}_{L}(W)}{D^2}\|z_{N+1} - x_\star\|^2}_{\text{Minimizer Estimate}}
        \leq
        \mathtt{PEP}_{L}(W)
    \end{equation}
    and
    \begin{equation}
        \underbrace{f_N - f_\star}_{\text{Primal Objective Gap}} \leq \underbrace{f_N -  (\alpha(x_0) - D\|\nabla \alpha\| )}_{\text{Computable Primal-Dual Gap}} \leq \mathtt{PEP}_{L}(W)
    \end{equation}
    where $\alpha(x) = \sum_{i=0}^N a_i\left(f(x_i) + \langle g_i, x-x_i\rangle + \frac{1}{2L}\|g_i\|^2\right)$, which lies in $A_f^\star$, and $z_{N+1} = x_0 - \frac{D^2}{2\mathtt{PEP}_{L}(W)}\nabla \alpha $.
\end{theorem}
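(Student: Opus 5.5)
The argument mirrors the proof of Theorem~\ref{thm:subgrad-primal-dual-certificate}, with Lemma~\ref{lem:grad-structured-optimal-proof-template} in place of Lemma~\ref{lem:subgrad-structured-optimal-proof-template}.

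\textbf{Step 1: obtain the certificate data.} Take $a$, $\hat\lambda$, $\hat Z$ and $\sigma=\mathtt{PEP}_{L}(W)/D^2>0$ from Lemma~\ref{lem:grad-structured-optimal-proof-template}, and define $\alpha(x)=\sum_i a_i\bigl(f_i+\langle g_i,x-x_i\rangle+\frac{1}{2L}\|g_i\|^2\bigr)$. Then
\[
\alpha(x_0)=\sum_i a_i\bigl(f_i+\langle g_i,x_0-x_i\rangle+\tfrac{1}{2L}\|g_i\|^2\bigr),\qquad \nabla\alpha=\sum_i a_i g_i .
\]

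\textbf{Step 2: $\alpha$ lies in $A_f^\star$.} Each summand is a minorant at the minimizer, since $\mathcal{Q}_{\star,i}\geq 0$ gives $f_\star\geq f_i+\langle g_i,x_\star-x_i\rangle+\frac{1}{2L}\|g_i\|^2$. Because $a\geq 0$ sums to one, the convex combination satisfies $\alpha(x_\star)\leq f_\star$, so $\alpha\in A_f^\star$.

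\textbf{Step 3: drop the $\hat\lambda$ terms and bound $\langle\hat Z,\hat G\rangle$.} For any $L$-smooth convex instance, every $\mathcal{Q}_{i,j}\geq 0$ by \eqref{eq:cocoercive}. Dropping the $\hat\lambda$ sum from the identity \eqref{eq:grad-structured-optimal-proof-template} gives
\[
\mathtt{PEP}_{L}(W)-(f_N-\alpha(x_0))\geq \sigma D^2+\langle\hat Z,\hat G\rangle .
\]
Since $\sigma>0$, the Schur complement of the PSD matrix \eqref{eq:grad-structured-optimal-proof-template-psd} gives $\hat Z\succeq \frac{1}{4\sigma}aa^\top$. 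As $\hat G$ is the Gram matrix of the $g_i$, this yields $\langle\hat Z,\hat G\rangle\geq\frac{1}{4\sigma}\|\nabla\alpha\|^2$. Hence
\[
\mathtt{PEP}_{L}(W)\geq f_N-\alpha(x_0)+\sigma D^2+\tfrac{1}{4\sigma}\|\nabla\alpha\|^2 .
\]
Note that these steps use only the identity, the interpolation inequalities and the PSD structure. They therefore hold for any problem in any dimension, even though the PEP was formulated with $d\geq N+2$.

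\textbf{Step 4: the first chain.} Set $z_{N+1}=x_0-\frac{1}{2\sigma}\nabla\alpha$; with $\sigma=\mathtt{PEP}_L(W)/D^2$ this matches the statement. Expanding the square,
\[
\sigma\|z_{N+1}-x_\star\|^2=\sigma\|x_0-x_\star\|^2+\langle\nabla\alpha,x_\star-x_0\rangle+\tfrac{1}{4\sigma}\|\nabla\alpha\|^2 .
\]
Using $\|x_0-x_\star\|\leq D$ and $\alpha(x_\star)=\alpha(x_0)+\langle\nabla\alpha,x_\star-x_0\rangle$, we get
\[
f_N-\alpha(x_\star)+\sigma\|z_{N+1}-x_\star\|^2\leq f_N-\alpha(x_0)+\sigma D^2+\tfrac{1}{4\sigma}\|\nabla\alpha\|^2\leq \mathtt{PEP}_L(W).
\]
The left inequality of the chain, $f_N-f_\star\leq f_N-\alpha(x_\star)+\sigma\|z_{N+1}-x_\star\|^2$, follows from $\alpha(x_\star)\leq f_\star$ and $\sigma\|z_{N+1}-x_\star\|^2\geq 0$.

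\textbf{Step 5: the computable chain.} Apply AM--GM, $\sigma D^2+\frac{1}{4\sigma}\|\nabla\alpha\|^2\geq D\|\nabla\alpha\|$, to get $f_N-(\alpha(x_0)-D\|\nabla\alpha\|)\leq\mathtt{PEP}_L(W)$. For the lower bound in this chain, note $\|x_\star-x_0\|\leq D$ and Cauchy--Schwarz give $\alpha(x_\star)\geq\alpha(x_0)-D\|\nabla\alpha\|$, so $f_N-f_\star\leq f_N-\alpha(x_\star)\leq f_N-(\alpha(x_0)-D\|\nabla\alpha\|)$.

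\textbf{Main point of care.} The delicate part is Step 2, because $\alpha$ need not be a global minorant, only a minorant at $x_\star$. Every lower bound must therefore be evaluated at $x_\star$ itself, which the argument does. The rest is routine because the lemma already supplies $\sigma>0$ explicitly.
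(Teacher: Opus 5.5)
Your proposal is correct and is essentially the paper's argument. The paper proves this theorem by rerunning the proof of Theorem~\ref{thm:subgrad-primal-dual-certificate} with Lemma~\ref{lem:grad-structured-optimal-proof-template} in place of Lemma~\ref{lem:subgrad-structured-optimal-proof-template}, together with closure of $A_f^\star$ under convex combinations, exactly as you do; your only deviation is taking $\sigma=\mathtt{PEP}_{L}(W)/D^2>0$ directly from the lemma instead of inferring $\sigma>0$ from the positive semidefinite block, which is valid.
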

Again, note that computing the affine dual certificate for a given algorithm requires computing a gradient at the terminal iterate (provided $a_N>0$). So in order to report the final objective value $f(x_N)$ and a dual lower bound $(\alpha(x_0) - D\|\nabla \alpha\|)$, one additional round of first-order queries is needed.

\subsection{Example Gradient Methods and their Implicit Dual Certificates}
Here, we consider three methods, gradient descent, the Optimized Gradient Method, and Nesterov's Fast Gradient Method. The first two have well-known PEP-style convergence proofs and attain their worst-case convergence rate on a Huber-type function. Together, these establish strong duality and attainment. On each method's tight Huber function instance, the iterates are all more than one gradient descent step away from the minimizer. As a result, the first two methods are nondegenerate.

The exactly optimal PEP certificate for Nesterov's Fast Gradient Method, to our knowledge, has not been analytically determined. A feasible, nearly optimal PEP certificate was identified by~\cite{kim2016OGM} with each $\lambda_{i,\star}=0$. So our theory can still be applied to extract primal-dual guarantees from this known, not-quite-tight certificate.

{\bf Gradient Descent.} First, consider gradient descent, which iterates for $n=1,\dots,N$
$$
x_n = x_{n-1} - \frac{1}{L}g_{n-1}.
$$
Drori and Teboulle~\cite{drori2014performance} gave a tight convergence guarantee for gradient descent, ensuring $f_N-f_\star\leq\mathtt{PEP}_{L}(W) = LD^2/(4N+2)$. Their dual proof sets multipliers on the star-row inequalities as
\begin{equation*}
\lambda_{\star,0}=\frac{1}{2N},\qquad
\lambda_{\star,i}
=\frac{2N+1}{(2N-i)(2N+1-i)}
\quad (i=1,\dots,N-1),
\qquad
\lambda_{\star,N}=\frac{1}{N+1}.
\end{equation*}
Hence, these are exactly the coefficients $a_i$ in the implicitly constructed dual certificate, and they satisfy $\sum_{i=0}^N a_i=1$. Therefore, gradient descent's known tight primal objective gap guarantee immediately strengthens to our pair of primal-dual guarantees with the dual certificate as the affine function
$$
\alpha(x)=\sum_{i=0}^{N} a_i\left(f(x_i)+\langle g_i,x-x_i\rangle+\frac{1}{2L}\|g_i\|^2\right),
$$
and coefficients $a_i = \lambda_{\star, i}$. Note that these are nonnegative and sum to one as needed.

{\bf The Optimized Gradient Method (OGM).} Second, we consider OGM as defined by~\cite{kim2016OGM}, which is known to be exactly minimax optimal for $L$-smooth convex minimization (provided $d\geq N+2$). This method first computes $g_0=\nabla f(x_0)$, sets $\tau_0=2$ and $z_1=x_0-\frac{2}{L}g_0$, and then iterates for $n=1,\dots,N$
\begin{align}
    \tau_n &= \begin{cases}
            \tau_{n-1} + 1 + \sqrt{1+2\tau_{n-1}} \qquad & \text{if $n<N$}\\
            \tau_{n-1} + \frac{1 + \sqrt{1+ 4\tau_{n-1}}}{2} \qquad & \text{if $n=N$,}
        \end{cases} \label{eq:OGM_tau_def}\\
    x_n &= \frac{\tau_{n-1}}{\tau_n} \left(x_{n-1} - \frac{1}{L} g_{n-1}\right) +  \frac{\tau_n - \tau_{n-1}}{\tau_n}z_{n},\\
    z_{n+1} &= z_{n} - \frac{\tau_n-\tau_{n-1}}{L}g_n.
\end{align}
Above, we use the $\tau_n$ scalar sequence of~\cite{SPGM} to define OGM. This is mathematically equivalent to the recursive definition and inductive analyses using $\theta^2_n$ sequences in the literature~\cite{TaylorBach2019a,park2023factor}.
One known convergence proof of OGM's optimal rate sets $\lambda_{\star,0}=\frac{2}{\tau_N}$ and each $\lambda_{\star,n} = \frac{\tau_{n}-\tau_{n-1}}{\tau_N}$ for $n=1,\dots,N$, proving $\mathtt{PEP}_{L}(W) = \frac{LD^2}{2\tau_N}$. So the corresponding implicit dual certificate constructed by OGM is the affine function
\begin{equation*}
    \alpha(x)
    =
    \frac{2}{\tau_N}\left(f(x_0) + \langle g_0, x-x_0\rangle +\frac{1}{2L}\|g_0\|^2\right)
    +
    \sum_{i=1}^{N}\frac{\tau_i-\tau_{i-1}}{\tau_N}\left(f(x_i) + \langle g_i, x-x_i\rangle +\frac{1}{2L}\|g_i\|^2\right).
\end{equation*}
The slope of this affine function is then exactly related to $(z_{N+1}-x_0)$ by
\begin{equation*}
    \nabla \alpha
    =
    \frac{2}{\tau_N}g_0 + \sum_{i=1}^N \frac{\tau_i-\tau_{i-1}}{\tau_N}g_i
    =
    -\frac{L}{\tau_N}(z_{N+1}-x_0).
\end{equation*}
Hence, the constructed $z_{N+1}$ from OGM exactly agrees with our theorem's $z_{N+1}$. So the auxiliary sequence precisely provides the location that the minimizer $x_\star$ must be in tight instances where the worst-case primal gap (and hence primal-dual gap) occurs. Thus, at each step, OGM estimates the worst-case minimizer's location and averages toward it while taking a gradient descent step.

Further, one can view the auxiliary/momentum sequence $z_{n+1}$ as tracking the slope of the aggregate dual solution implicitly constructed by the algorithm's optimal proof. As we observed with the SSEP method, OGM can then be stated in primal-dual terms as alternating averaging with descent of the objective and descent on the dual certificate. With the $\tau_n$ sequence defined as above, OGM initializes $\alpha_1(x) = f(x_0) + \langle g_0, x - x_0\rangle + \frac{1}{2L}\|g_0\|^2$ (which lies in $A_f^\star$) and iterates
\begin{align*}
    x_n &= \frac{\tau_{n-1}}{\tau_n} \left(x_{n-1} - \frac{1}{L} g_{n-1}\right) +  \frac{\tau_n - \tau_{n-1}}{\tau_n}\left(x_0  - \frac{\tau_{n-1}}{L}\nabla\alpha_n\right)\\
    \alpha_{n+1} &= \frac{\tau_{n-1}}{\tau_n} \alpha_n +  \frac{\tau_n - \tau_{n-1}}{\tau_n}\left(f(x_n) + \langle g_n, \cdot-x_n\rangle + \frac{1}{2L}\|g_n\|^2\right),
\end{align*}
ultimately reporting $x_N$ and $\alpha_{N+1}$. Taking either of these perspectives, our Theorem~\ref{thm:grad-primal-dual-certificate} provides OGM with two strengthened statements of its optimal convergence rate: (i) OGM has primal-dual gap $f(x_N)-\alpha_{N+1}(x_\star)$ bounded by the same optimal rate and strictly improves when the minimizer differs from its auxiliary $z_{N+1}$, (ii) OGM has a computable primal-dual gap converging at the same optimal rate as it alternates between building $x_n$ and building $\alpha_{n+1}$.

{\bf Nesterov's Fast Gradient Method.} Finally, we consider the fast gradient method, iterating 
\begin{align*}
	t_n &= \frac{1+\sqrt{1+4t_{n-1}^2}}{2},\\
	x_n &= \frac{t_n - 1}{t_n} \left(x_{n-1} - \frac{1}{L} g_{n-1}\right) +  \frac{1}{t_n}z_{n},\\
	z_{n+1} &= z_{n} - \frac{t_n}{L}g_n
\end{align*}
with $t_0=1$ and $z_1=x_0 - t_0 g_0/L$. A PEP-style convergence guarantee for this method was given by~\cite{kim2016OGM} by considering $\sigma = \frac{L}{2t_N^2}$, a sparse $\lambda$ selection having only nonzero values for $\lambda_{i,i+1} = \frac{t_i^2}{t_N^2}$ and $\lambda_{\star,i} = \frac{t_i}{t_N^2}$, and finally,
\begin{align*}
	Z &= \frac{1}{2t_N^2}
	\left[
	\begin{pmatrix}
		\sqrt{L}\\[0.2em]
		-\frac{t_0}{\sqrt{L}}\\
		\vdots\\
		-\frac{t_N}{\sqrt{L}}
	\end{pmatrix}
	\begin{pmatrix}
		\sqrt{L} &
		-\frac{t_0}{\sqrt{L}} &
		\cdots &
		-\frac{t_N}{\sqrt{L}}
	\end{pmatrix}
	+
	\frac{1}{L}\operatorname{diag}(0,t_0^2,\dots,t_{N-1}^2,0)
	\right].
\end{align*}
(Note this proof matches Nesterov's classic rate, but is not exactly tight in the PEP-sense.) This certificate proves a guarantee of
$$
f(x_N)-f(x_\star)\leq \frac{L\|x_0-x_\star\|^2}{2t_N^2}
\leq \frac{2L\|x_0-x_\star\|^2}{(N+2)^2}.
$$
As we did with OGM, applying our reformulation of guarantees, Nesterov's Fast Gradient Method can be seen as implicitly building the dual model
\[
\alpha_n(x)
=
\sum_{i=0}^{n-1}
\frac{t_i}{t_{n-1}^2}
\left(
f(x_i)+\langle g_i,x-x_i\rangle+\frac{1}{2L}\|g_i\|^2
\right)
\]
at each iteration. The slope of this affine function is given by $\nabla \alpha_n
=
\sum_{i=0}^{n-1}\frac{t_i}{t_{n-1}^2}g_i
=
-\frac{L}{t_{n-1}^2}(z_n-x_0)$.
Then, one can write the update in primal-dual form. In these terms, Nesterov's method initializes $\alpha_1(x)=f(x_0)+\langle g_0,x-x_0\rangle+\frac{1}{2L}\|g_0\|^2,$ and then iterates
\begin{align*}
	x_n
	&=
	\frac{t_n-1}{t_n}
	\left(x_{n-1}-\frac{1}{L}g_{n-1}\right)
	+
	\frac{1}{t_n}
	\left(x_0-\frac{t_{n-1}^2}{L}\nabla\alpha_n\right),\\
	\alpha_{n+1}
	&=
	\frac{t_n-1}{t_n}\alpha_n
	+
	\frac{1}{t_n}
	\left(
	f(x_n)+\langle g_n,\cdot-x_n\rangle+\frac{1}{2L}\|g_n\|^2
	\right).
\end{align*}
While this runs, the same classic $O(1/N^2)$ primal guarantee applies more strongly, ensuring that the primal-dual gap with estimate error ($f(x_N)-\alpha_{N+1}(x_\star) + \frac{L}{2t_N^2}\|z_{N+1}-x_\star\|^2$) and the computable primal-dual gap ($f(x_N)-\left(\alpha_{N+1}(x_0)-D\|\nabla\alpha_{N+1}\|\right)$) are both at most $\frac{LD^2}{2t_N^2}$.

\section{Conclusion}
Dual interpretations of primal methods have occurred throughout the optimization literature. In nonsmooth optimization, Nesterov~\cite{Nesterov2009PD} connected traditional primal subgradient methods with a dual perspective, enabling new averaging methods. More recently, Grimmer and Li~\cite{GrimmerLi2025} explicitly connected dual averaging to the kind of primal-dual minimax perspectives considered here.
In smooth optimization, several previously discussed works~\cite{Diakonikolas2019,AllenZhu2017,LanZhou2018,Drusvyatskiy2018,gutman2022,WangAbernethyLevy2024,burns2026} presented dual analyses and explanations of momentum. Here, we provide a general mechanism from which dual phenomena arise. Equivalent primal-dual convergence theory must exist as a necessary consequence of structured PEP proofs and nondegeneracy.

For Lipschitz convex minimization and smooth convex minimization, we have shown that attempts to design nondegenerate algorithms and proofs for reducing the objective gap inherently prove stronger guarantees on primal-dual gaps. Future efforts to design optimized algorithms may benefit from pursuing primal-dual guarantees directly. Building on our understanding of auxiliary sequences in momentum methods as an implicit dual proof construction may also lead to new methods.

{\small \paragraph{Acknowledgments.} This work was supported in part by the Air Force Office of Scientific Research. Benjamin Grimmer was supported as a fellow of the Alfred P. Sloan Foundation.}

{\small
\bibliographystyle{unsrt}
\bibliography{bibliography}
}

\end{document}